\newtheorem{theorem}{Theorem}
\newtheorem{prop}[theorem]{Proposition}
\newtheorem{cor}[theorem]{Corollary}
\newtheorem{defi}{Definition}
\let\oldmarginpar\marginpar
\renewcommand\marginpar[1]{\-\oldmarginpar[\raggedleft\footnotesize #1]%
{\raggedright\footnotesize #1}}
\newcommand{\F}{{\cal F}}
\def\vertex(#1){\put(#1){\circle*{2}}}
\def\vertexo(#1){\put(#1){\circle{2}}}
\def\vert(#1){\put(#1){\circle*{1.5}}}
\def\verto(#1){\put(#1){\circle{1.5}}}
\def\lab(#1)#2{\put(#1){\makebox(0,0)[c]{#2}}}
\newenvironment{proof}{\noindent\textbf{Proof. }}{\hfill$\square$}
\newenvironment{claimproof}{\textsc{Proof.} }{\hfill$\diamond$}
\begin{document}




\title{Pebbling on Directed Graphs with Fixed Diameter}


\author{
John Asplund\\
{\small Department of Technology and Mathematics,} \\
{\small Dalton State College,} \\
{\small Dalton, GA 30720, USA} \\
{\small jasplund@daltonstate.edu}\\
\\
Franklin Kenter\\
{\small Department Mathematics}\\
{\small United States Naval Academy} \\
{\small Annapolis, MD 21401, USA} \\
{\small kenter@usna.edu}\\
\\
 }

\maketitle

\begin{abstract}
Pebbling is a game played on a graph. The single player is given a graph and a configuration of pebbles and may make pebbling moves by removing 2 pebbles from one vertex and placing one at an adjacent vertex to eventually have one pebble reach a predetermined vertex.  The pebbling number,  $\pi(G)$, is the minimum number of pebbles such that regardless of their exact configuration, the player can use pebbling moves to have a pebble reach any predetermined vertex.

Previous work has related $\pi(G)$ to the diameter of $G$. Clarke, Hochberg, and Hurlbert demonstrated that every connected undirected graph on $n$ vertices with diameter 2 has $\pi(G) = n$ unless it belongs to an exceptional family of graphs, consisting of those that can be constructed in a specific manner; in which case $\pi(G) = n +1$. By generalizing a result of Chan and Godbole, Postle showed that for a graph with diameter $d$, $\pi(G) \le n 2^{\lceil \frac{d}{2} \rceil} (1+o_n(1))$.

In this article, we continue this study relating pebbling and diameter with a focus on directed graphs. This leads to some surprising results. First, we show that in an oriented directed graph $G$ (in the sense that if $i \to j$ then we cannot have $j \to i$), it is indeed the case that if $G$ has diameter 2, $\pi(G) = n$ or $n + 1$, and if $\pi(G) = n+1$, the directed graph has a very particular structure. In the case of general directed graphs (that is, if $i \to j$, we may or may not have an arc $j \to i$) with diameter 2, we show that $\pi(G)$ can be as large as $\frac32 n + 1$, and further, this bound is sharp. More generally, we show that for general directed graphs, $\pi(G) \le 2^d n / d + f(d)$ where $f(d)$ is some function of only $d$.
\end{abstract}

%




\section{Introduction and Preliminaries}\label{intro}


Pebbling on graphs is a game that was first mentioned by Lagarias and Saks (via a private
communication to Chung) in relation to a problem by \citep{LK} and made popular in the literature by \citep{chung1989pebbling}. The game is played where pebbles are placed on the vertices in a {\it configuration}, and one vertex is designated the {\it root}. The player may then make {\it pebbling moves} consisting of removing two pebbles from one vertex and placing one pebble onto an adjacent vertex. The goal of the game is to have a pebble reach the root. The {\it pebbling number} of a graph, $G$, denoted $\pi(G)$, is the minimum number of pebbles such that any configuration with that many pebbles can be won by the player no matter the placement of the root. For a general survey of graph pebbling, see \citep{glennSurvey2} or  \citep{glennSurvey}.

Before we continue, let us carefully define our use of Big- and little-O notation. We say $f(n) = O_{n} (g(n))$ if $\lim\sup_{n \to \infty} | f(n) / g(n) |$. Similarly, we say $f(n,d) = O_{n,d} (g(n,d))$ if $\lim\sup | {f(n,d) / g(n,d)} | < \infty$ whenever $n \to \infty$ or $d \to \infty$ or both.

This study concerns the relationship between the pebbling number, $\pi(G)$, the diameter, $d$, and the number of vertices, $n$. Recall that the {\it diameter} of an undirected graph is the maximum length of the shortest path between all pairs of vertices.  It is known that if $G$ is an undirected graph on $n$ vertices with diameter 2, Clarke, Hochberg, and Hurlbert \citep{CHH} showed that either $\pi(G) = n$ or $n + 1$ and classified all graphs with $\pi(G) = n+1$. If $\pi(G) = n$, $G$ is said to be {\it Class-0}, otherwise, it is {\it Class-1}. In the case that $G$ has vertex-connectivity 3 as well, then $\pi(G) = n$ \citep{CHH}. For the case that $G$ has diameter 3, Postle, Streib, and Yerger showed that $\pi(G) \le \left\lfloor\frac{3}{2} n \right\rfloor+2$, and further this bound is sharp; they also showed that for diameter 4 graphs $\pi(G) \le \lceil \frac32 n \rceil + O_{n}(1)$~~\citep{PSY}.  More generally, for an undirected graph with diameter $d$, Bukh showed that the pebbling number is bounded by $\pi(G) \le (2^{\lceil\frac d2 \rceil}-1)n + O_n(\sqrt{n})$ \citep{bukh2006maximum}. Further, there is a graph where
$ \displaystyle \frac{2^{\lceil \frac d2 \rceil}-1}{\lceil \frac{d}{2} \rceil} n + O_{n,d}(1) \le \pi(G)$.  The upper bound has been improved by several subsequent works by Postle, Streib and Yerger where $\pi(G) \le (2^{\lceil\frac d2 \rceil}-1)n  + 16^d + 1$~~\citep{PSY}.




Our study moves graph pebbling into the realm of directed graphs. Indeed, we will assume that all graphs are directed. We use the following notation. Let $D = (V, E)$ be a directed graph where $V$ is the set of vertices and $E$ is the set of directed edges (or arcs). For a directed graph, we will take $E$ to be a subset of ordered pairs $E \subseteq V \times V$ where $(i,j) \in E$ means there is a directed edge from $i \to j$. For our purposes, we will assume that there are no loops (i.e., $(i,i)$ is never an edge). 
In the case where for every unordered pair $\{i,j\}$ in a directed graph is either $i \to j$ or $j \to i$ but not both, we call the graph a \textit{tournament}. 
We focus on loopless directed graphs which may have bidirected edges (i.e., there may be an arc from $i \to j$ {\it and} vice versa); we simply refer to these as directed graphs. At times, we will examine \textit{oriented directed graphs} or \textit{oriented graphs} which do not contain bidirected edges (that is, there is either an arc $i\to j$ or $j\to i$, but not both).  A directed graph has (strong) {\it diameter} $d$ if for every ordered pair of vertices, $(i,j)$ there is a directed path from $i$ to $j$ of length at most $d$. 
Specifically, when a directed graph has diameter $2$, for any pair of vertices $i, j$, either $i \to j$ or there is a vertex $k$ with $i \to k \to j$. 
We emphasize that in the case of directed graphs, the shortest path from $i$ to $j$ may contain a different set of vertices than the shortest path from $j$ to $i$.

The {\it strong vertex connectivity} of a directed graph, $D$, is the minimum number of vertices needed to remove from the graph so that for some ordered pair of vertices $(i,j)$ there is no directed path from $i$ to $j$ (however, there could still be a directed path from $j$ to $i$). Hereafter, we will simply call it the {\it strong connectivity} of $D$. If the minimum number of vertex-disjoint directed paths between any pair of vertices in a directed graph $G$ is $k$, we say $G$ is {\it $k$-strongly connected} or has strong connectivity $k$.

Pebbling for directed graphs works analogously for directed games where a pebbling move can be made using arc $i \to j$ by removing 2 pebbles from $i$ and placing one pebble on $j$. The reverse move from $j$ to $i$ can only be made if there is arc $j \to i$. Formally, a configuration is viewed as a function $C(G): V(G)\to \mathbb{N}$. The {\it size} of a configuration is the total number of pebbles, denoted $|C|$. When a root $r$ is specified, a configuration is $r$-{\it solvable} (or just {\it solvable} when $r$ is clear) if there is a sequence of pebbling moves (perhaps zero moves) such that a pebble reaches $r$; otherwise, the configuration is {\it unsolvable}.

Our contribution are analogous to the results in \citep{CHH}, demonstrating a relationship between $\pi(G)$, the diameter, and connectivity, but for directed graphs. We show that some of the results from the undirected case port into the directed case, while others, do not. We prove the following:

\begin{itemize}
\item for an oriented graph $D$ of order $n$ with diameter 2, $\pi(D)= n$ or $ n+1$ (Theorem \ref{noBiN+1}, Section~\ref{sec:boundsStrongDiameter2});
\item if an oriented graph $D$ has strong diameter 2 and $\pi(D) = n+1$, it belongs to a family of directed graphs with a very particular structure (Sections \ref{Class1} and \ref{F});
\item for a directed graph $D$  with diameter 2, $\pi(D) < \frac{3}{2}n$, and further, this bound is sharp (Theorem~\ref{mixed2}, Section~\ref{noBiN+1}); and 
\item for a directed graph $D$ with diameter $d, \pi(D) \le 2^d n / d + g(d)$ for some function of only $d$, $g(d)$ and there is an infinite family of a directed graphs with diameter $d$ and $n$ unbounded, where for any $D'$ in the family, $\pi(D') \geq (2^{d-1}-1) \left\lfloor \frac{n-1}{2} \right\rfloor + 2^{{({n-2}) ( {\rm mod~} d})} -1$ (Theorems~\ref{dboundsharp} and \ref{dboundupper}, Section \ref{upper}) 
\end{itemize}

%
%
%

%

\section{Bounds for Directed Graph with Strong Diameter 2} \label{sec:boundsStrongDiameter2}


In this section, we focus on creating bounds for directed and oriented graphs with strong diameter $2$.

\begin{prop}\label{3orless}
Let $D$ be a directed graph with strong diameter 2. Then in any configuration of pebbles that is unsolvable, no vertex has more than 3 pebbles.
\end{prop}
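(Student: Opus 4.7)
The plan is to prove the contrapositive: if some vertex $v$ carries at least $4$ pebbles in a configuration $C$ with root $r$, then $C$ is $r$-solvable. This is essentially a one-case argument that exploits the strong diameter~$2$ hypothesis directly.

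First, I would dispense with the trivial case $v=r$: if the vertex holding $\geq 4$ pebbles is itself the root, a pebble already sits on $r$ and no moves are required. So I may assume $v\neq r$. By strong diameter~$2$, there is a directed path from $v$ to $r$ of length $1$ or $2$.

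If $v \to r$ is an arc, two pebbles on $v$ suffice to deliver one pebble to $r$, and we have four. If instead the shortest directed path is $v \to u \to r$ for some intermediate vertex $u$, then using the arc $v \to u$ twice (which consumes $4$ pebbles from $v$) places $2$ additional pebbles on $u$; a single pebbling move along $u \to r$ then delivers a pebble to $r$. In either subcase the four pebbles at $v$ on their own are enough, regardless of what the rest of the configuration looks like, so $C$ is $r$-solvable.

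Since $r$ was arbitrary, no unsolvable configuration can have a vertex bearing more than $3$ pebbles, yielding the proposition. There is really no main obstacle here: the only thing to be careful about is that the argument must use the strong (directed) diameter, i.e., the existence of a directed path \emph{from} $v$ \emph{to} $r$ of length at most $2$, rather than the existence of any undirected path; this is precisely what the strong diameter~$2$ hypothesis supplies.
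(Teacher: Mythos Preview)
Your proof is correct and follows essentially the same argument as the paper: assume a vertex $v$ has four pebbles, use strong diameter~$2$ to find a directed path of length at most~$2$ from $v$ to $r$, and push a pebble along it. The only difference is cosmetic---you explicitly single out the case $v=r$ and spell out the two pebbling moves along $v\to u\to r$, whereas the paper leaves these implicit.
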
 

\begin{proof}
Suppose a vertex had 4 pebbles. Call that vertex $v$, and let $r$ be the root. Since $D$ has strong diameter $2$, either $v \to r$ or there is vertex $k$ with $v \to k \to r$. In either case, the pebbles on $v$ can be used to get at least one pebble on $r$.
\end{proof}

\begin{theorem}\label{noBiN+1}
Let $D$ be an oriented graph with strong diameter $2$ on $n$ vertices. Then $\pi(D)\leq n+1$. 
\end{theorem}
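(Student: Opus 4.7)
My plan is a proof by contradiction: suppose $C$ is an unsolvable configuration with $|C|=n+1$ and root $r$. By Proposition~\ref{3orless}, $C(v)\leq 3$ for every $v$. Since $C$ is unsolvable, $C(r)=0$ and $C(u)\leq 1$ for every $u\in U:=N^-(r)$ (else the move $u\to r$ solves). Writing $V':=V\setminus(\{r\}\cup U)$, strong diameter~$2$ forces each $v\in V'$ to lie at distance exactly $2$ from $r$ and hence to have at least one out-neighbor in $U$.

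I partition vertices by pebble count: $A=\{v\in V':C(v)\geq 2\}$, $U_i=\{u\in U:C(u)=i\}$ for $i=0,1$, and $V_i=\{v\in V':C(v)=i\}$ for $i=0,1,2,3$. Unsolvability yields two structural constraints: no $v\in A$ has an out-neighbor in $U_1$ (a direct push $v\to u$ would give $u$ two pebbles, solving), and each $u\in U_0$ has at most one in-neighbor in $A$ (two direct pushes would give $u$ two pebbles). Together these produce $|A|\leq|U_0|$ and, via the implicit matching, a map $M\colon A\hookrightarrow U_0$ such that $M(v)$ is the unique $A$-in-neighbor of its image. Subtracting the vertex equation $|U|+|V'|=n-1$ from the pebble equation $|U_1|+|V_1|+2|V_2|+3|V_3|=n+1$ gives
\[
|V_2|+2|V_3|=|U_0|+|V_0|+2,
\]
and combining with $|V_2|+|V_3|\leq|U_0|$ produces $|V_3|\geq|V_0|+2\geq 2$. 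So there exist distinct $v_1,v_2\in V_3$, each holding three pebbles.

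Now I apply strong diameter~$2$ to the pair $(v_1,v_2)$. The direct edge $v_1\to v_2$ is impossible, since pushing $v_1\to v_2$ gives $v_2$ four pebbles and two subsequent pushes $v_2\to u$ (for $u\in N^+(v_2)\cap U_0$) would deliver two pebbles to $u$, solving. So there is an intermediate $x$ with $v_1\to x\to v_2$. A short case analysis eliminates $x\in U_1$ (violating the first constraint) and $x$ with $C(x)\geq 1$, i.e.\ $x\in V_1\cup V_2\cup V_3$ (then $v_1\to x$ raises $x$ to at least two pebbles, $x\to v_2$ gives $v_2$ four pebbles, and again we solve). This leaves $x\in V_0\cup U_0$ with $C(x)=0$, and symmetrically a vertex $y\in V_0\cup U_0$ with $v_2\to y\to v_1$.

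The main obstacle is to finish the contradiction in this restricted case. My plan is to invoke strong diameter~$2$ on the auxiliary pairs $(v_1,M(v_2))$ and $(v_2,M(v_1))$: since matching uniqueness forces $v_1\not\to M(v_2)$ and $v_2\not\to M(v_1)$, each pair requires an intermediate $z$, and unsolvability demands both $C(z)=0$ and that $z$ not be the matched target of any $A$-vertex other than the one using it (otherwise one obtains either a chain solvability or a second $A$-in-neighbor at a $U_0$-vertex). In the tightest setting ($|V_0|=0$ with $|A|=|U_0|$), this forces the intermediate for $(v_1,M(v_2))$ to be $M(v_1)$, so $M(v_1)\to M(v_2)$ must be an arc of $D$; by symmetry $M(v_2)\to M(v_1)$ must also be an arc, contradicting the oriented hypothesis. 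In more slack subcases -- when $z\in V_0$ or $z\in U_0\setminus M(A)$ -- an analogous diameter~$2$ analysis from additional $V_3$-vertices or from empty intermediates produces the same kind of forbidden bidirected pair. The delicate bookkeeping of paths and the interplay between the matching constraint, orientedness, and repeated use of diameter~$2$ is the part I expect to demand the most care.
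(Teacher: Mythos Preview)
Your approach is essentially the same as the paper's: partition $V\setminus\{r\}$ by distance from $r$ and by pebble count, observe that each $v\in A=V_2\cup V_3$ has a unique out-neighbour in $U_0$ (your $M$; the paper calls these $B_2,B_3$), count to obtain $|V_3|=2+|V_0|+(|U_0|-|A|)\ge 2$, and then exploit diameter~$2$ on the cross-pairs $(v_i,M(v_j))$ with $i\neq j$ to force a bidirected edge. Your tight case ($|V_0|=0$, $|A|=|U_0|$) is exactly the paper's argument specialised to $|V_3|=2$.

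Two remarks. First, your detour through the paths $v_1\to x\to v_2$ and $v_2\to y\to v_1$ is unnecessary; the paper never uses these, and your own argument only needs the intermediates on the paths $(v_i,M(v_j))$. Second, for the slack cases you leave the argument vague, and the paper's way of finishing is cleaner than the case analysis you anticipate: rather than splitting into tight/slack subcases, run the intermediate-vertex argument over \emph{every} unordered pair $\{v_i,v_j\}\subseteq V_3$. For each such pair the two intermediates $z_{ij},z_{ji}$ (on $v_i\to z_{ij}\to M(v_j)$ and $v_j\to z_{ji}\to M(v_i)$) lie in $V_0\cup(U_0\setminus M(A))\cup\{M(v_i),M(v_j)\}$, are distinct, and cannot be exactly $\{M(v_i),M(v_j)\}$ (else $M(v_i)\leftrightarrow M(v_j)$). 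Counting the resulting demand on $V_0\cup(U_0\setminus M(A))$ against the equality $|V_3|=2+|V_0|+(|U_0|-|A|)$ yields the contradiction in one stroke, with no separate treatment of slack subcases.
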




\begin{proof}
We argue by contradiction, supposing that $C$ is a configuration with $n+1$ pebbles and there is no sequence of pebbling moves that moves a pebble to a root $r$. By Proposition~\ref{3orless}, no vertex in $D$ can have more than $3$ pebbles. Partition $V(D)$ into $\{A,B,\{r\}\}$ where all vertices in $A$ are distance $2$ from vertex $r$ and all vertices in $B$ are distance $1$ from $r$. If $r$ has a pebble, we are done, so for the remainder of this proof, we assume $C(r) = 0$. By employing the same argument as above, all vertices in $B$ have at most $1$ pebble. 
By the pigeonhole principle, if $A$ is empty, then one vertex in $B$ has at least two pebbles, but then we can pebble $r$. So there is at least one pebble on a vertex in $A$. 

Partition $A$ further into $\{A_3,A_2,A_1,A_0\}$ such that each vertex in $A_3$, $A_2$, $A_1$, and $A_0$ contain $3$, $2$, $1$, and $0$ pebbles respectively. Notice that each vertex in $A$ is joined to at least one vertex in $B$. We further partition $B$ into $\{B_3,B_2,B_1,B_0\}$ as follows. For each $a \in A$, choose a path $a \to b \to r$ for some $b \in B$. For $a \in A_2 \cup A_3$, these paths must necessarily use different $b$'s, as otherwise two pebbles could reach $b$ which can be used to pebble $r$. We define $B_3$ to be the set of $b$ chosen for each $a \in A_3$ and $B_2$ to be those chosen for each $a \in A_2$. Hence, $|A_3| = |B_3|$ and $|A_2| = |B_2|$. Let $B_1$ and $B_0$ be the vertices of $B$ with exactly 1 or 0 pebbles respectively.  Notice that any vertex that lies on a directed path of length $2$ from a vertex in $A_2\cup A_3$ to $r$ must have $0$ pebbles. Therefore,  $\{B_3,B_2,B_1,B_0\}$ is indeed a partition of $B$.

Notice that each vertex in $B_0$ has exactly $0$ pebbles and it is possible there are vertices in $B_0$ that have arcs from any vertex in $A$.
To ensure there are no vertices with $4$ pebbles after one pebbling move, there are no arcs between two vertices in $A_3$, no pair of vertices in $A_2\cup A_3$ has an arc to the same vertex in $A_2$, and there are no arcs from a vertex in $A_2$ to a vertex in $A_3$. 
Since $B_3\cup B_2\cup B_0$ have exactly $0$ total pebbles, the number of pebbles on $A_3$, $A_2$, $A_1$, and $B_1$ sum up to $n+1$. Thus,
\begin{eqnarray}
\notag
3|A_3|+2|A_2|+|A_1|+|B_1| &=& n+1 = 2 + |A_3|+|A_2|+|A_1|+|A_0|+|B_3|+|B_2|+|B_1|+|B_0| \\
\notag
2|A_3|+|A_2| &=& 2 + |A_0|+|B_3|+|B_2|+|B_0|  \\
\notag
2|A_3|+|A_2| &=& 2 + |A_0|+|A_3|+|A_2|+|B_0|  \\
\label{b0small}
|A_3| &=& 2 + |A_0|+|B_0|  \\
\label{a3size}
|A_3| &\ge& 2. 
\end{eqnarray}
The final line follows as $A_0$ and $B_0$, in the worst case, could be empty.

That is, there are at least two vertices with three pebbles. We will show this causes a contradiction.

Let $x_1,x_2\in A_3$, and let $y_1,y_2\in B_3$ be the corresponding vertices of $x_1$ and $x_2$ in $B_3$.
We aim to have $|A_0\cup B_0|$ be as small as possible. There are directed paths of length at most two, $P_1$ and $P_2$, from $x_2$ to $y_1$ and from $x_1$ to $y_2$ respectively. Such paths must exist since $D$ has strong diameter $2$. There is no edge from $x_2$ to $y_1$ or $x_1$ to $y_2$, as otherwise we can pebble $r$. Thus, we can take $P_1$ to be $x_1 \to q_1 \to y_2$ and $P_2$ to be $x_2 \to q_2 \to y_1$. Note that it must be the case that $q_1, q_2 \in A_0\cup B_0\cup\{y_1,y_2\}$,
$q_1$ and $q_2$ are distinct (as we could pebble $r$ otherwise), and $\{q_1,q_2\}\neq \{y_1,y_2\}$ (as we have bidirected edges otherwise). Note that we can make the same argument for each pair in $A_3$.  Therefore, $2 {|A_3| \choose 2} \leq  |A_0\cup B_0| =  |A_0| + |B_0| $. However, by line (\ref{a3size}), $|A_3| = 2 + |A_0|+|B_0|$. Since ${{x+2} \choose 2} > 2 + x$ for any real number $x$, we reach a contradiction by taking $x = |A_0|+|B_0|$.
\end{proof}

When bidirected edges are allowed in $D$, the bound in Theorem~\ref{noBiN+1} changes quite drastically as we see in the next theorem. 

\begin{theorem} \label{mixed2}
Let $D$ be a diameter $2$ directed graph of order $n$. Then $\pi(D) <\frac{3}{2}n $. Further, this bound is sharp.
\end{theorem}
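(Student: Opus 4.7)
The plan is to mimic the partition setup used in the proof of Theorem~\ref{noBiN+1}, but the ``pair argument'' at the end of that proof used the absence of bidirected edges and cannot be salvaged here. Instead, a simpler counting inequality suffices for the upper bound. For sharpness, I would construct an explicit family of directed graphs with bidirected edges arranged so that pebbles on one side can never be pooled at any single vertex on the other side.

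For the upper bound, suppose $C$ is an unsolvable configuration with root $r$ and $|C| = N$. By Proposition~\ref{3orless}, no vertex carries more than $3$ pebbles. Partition $V(D) = \{r\} \cup A \cup B$, where $A$ and $B$ are the vertices at strong distance $2$ and $1$ from $r$ respectively, and write $A_i$, $B_i$ for the vertices of $A$, $B$ holding exactly $i$ pebbles. Then $C(r) = 0$ and each $b \in B$ has at most one pebble. Every $a \in A_2 \cup A_3$ has all its $B$-out-neighbors in $B_0$, and no two distinct vertices of $A_2 \cup A_3$ can share an out-neighbor in $B$; otherwise, executing the appropriate move(s) brings some $b \in B$ to at least $2$ pebbles, and then $b \to r$ solves. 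This yields an injection $A_2 \cup A_3 \hookrightarrow B_0$, so $|B_0| \ge |A_2| + |A_3|$. Combining $n - 1 = |A| + |B| \ge |A_3| + |B_0| \ge 2|A_3|$ gives $|A_3| \le (n-1)/2$. Substituting $N = 3|A_3| + 2|A_2| + |A_1| + |B_1|$ into the identity $n - 1 = |A_0|+|A_1|+|A_2|+|A_3|+|B_0|+|B_1|$ and using $|B_0| \ge |A_2| + |A_3|$ yields $N \le n - 1 + |A_3| - |A_0| \le 3(n-1)/2 < 3n/2$.

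For sharpness, take $V = \{r\} \cup \{u_1, \ldots, u_k\} \cup \{w_1, \ldots, w_k\}$ (so $n = 2k+1$) with arcs $u_i \to w_i$, $w_i \to u_j$ for all $i, j$, $w_i \to w_j$ for $i \ne j$, and bidirected $w_i \leftrightarrow r$; strong diameter $2$ is easy to verify. Place $3$ pebbles on each $u_i$ and zero elsewhere, for a total of $3k$ pebbles. I would prove unsolvability by establishing the invariant that $C(r) = 0$ and $C(u_i) + 2 C(w_i) \le 3$ for every $i$, which clearly holds initially. The invariant forces $C(w_i) \le 1$ and $C(r) = 0$, so the only potentially legal move is $u_i \to w_i$ for some $i$ with $C(u_i) \ge 2$; a direct check shows this move conserves $C(u_i) + 2C(w_i)$ and leaves $C(r)$ fixed at $0$. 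Hence $r$ is never pebbled, so $\pi(D) \ge 3k + 1$, and together with the upper bound we obtain $\pi(D) = 3k+1 = (3n-1)/2$, so $\pi(D)/n \to 3/2$ as $k \to \infty$. The main obstacle is identifying this invariant: the naive bound $C(w_i) \le 1$ is broken by $u_i \to w_i$ starting from $C(w_i) = 1$, but the coupled quantity $C(u_i) + 2C(w_i)$ is exactly preserved by that move, and the invariant's ceiling simultaneously forbids every other potential pebbling move from becoming legal.
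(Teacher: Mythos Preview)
Your proof is correct and follows essentially the same approach as the paper: the upper bound is the same injection from $A_2\cup A_3$ into $B_0$ (the paper phrases it as pairing each $a\in A_{2+}$ with a distinct $b_a$), and your sharpness example is a minor variant of the paper's layered construction (the paper uses arcs $r\to u_i$ and omits $w_i\to u_i$, but the mechanism is identical). Your invariant $C(u_i)+2C(w_i)\le 3$ is a tidy certificate of unsolvability, whereas the paper argues more informally that each $w_i$ can receive at most one pebble from its unique in-neighbor $u_i$.
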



\begin{figure}[!b]
\begin{center}
\includegraphics[width=0.3\textwidth]{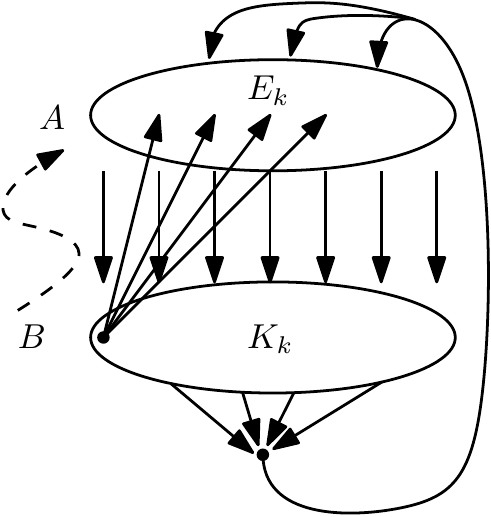}
\end{center}

\caption{A construction for extremal examples in Theorem \ref{mixed2}. The top is an empty graph on $k$ vertices, and the bottom is a complete graph on $k$ vertices. The 
dotted line indicates that possible arcs from $B$ to $A$ are present except for the matching of arcs indicated from $A$ to $B$.}
\label{mixed2const}
\end{figure}

\begin{proof}
Choose a root $r$ and let $C$ be an unsolvable configuration of  pebbles. Then by Proposition~\ref{3orless}, all vertices in $D$ have at most 3 pebbles. Let $A_{2+}$ be the set of vertices with at least $2$ pebbles. If $r$ is in the out-neighborhood of some $a \in A_{2+}$, then $r$ can be pebbled directly using the two pebbles on $a$. Hence, we can assume that the out-neighborhood of each vertex of $A_{2+}$ does not contain $r$. For each $a \in A_{2+}$, choose a $b_a$ such that there is a path $a\to b_a \to r$. 
If $b_u = b_w$ for two distinct $u, w \in A_{2+}$, then $C$ is $r$-solvable by moving two pebbles onto $b_u (=b_w)$ from $u$ and $w$, and so $r$ can be pebbled from $b_u$. Further, if $b_u$ has a pebble, then $C$ is $r$-solvable along the path $u\to b_u \to r$. Therefore, each pair $a, b_a$ has at most $3$ pebbles, and further, any single vertex in $D$ that is not in such a pair has at most $1$ pebble (otherwise, it would be in $A_{2+}$). To optimize the number of pebbles in an unsolvable configuration, one should have as many pairs as possible, as each pair allows for 3 pebbles whereas non-paired vertices allow the placement of at most 2 pebbles. Since there are at most $\lfloor (n-1)/2 \rfloor$ pairs, it must have at most $3 \lfloor (n-1)/2 \rfloor +1$ pebble where the extra pebble may result from a single vertex not in a pair. Since $3 \lfloor (n-1)/2 \rfloor +1 < \frac{3}{2}n$, this completes the proof. 

To see this is sharp, choose a positive integer $k$, and consider the undirected complete bipartite graph $K_{k,k}$ with parts $A$ and $B$. Choose a perfect matching, and orient those edges from $A$ to $B$, and orient all other edges in reverse. Add a complete bidirected graph to the vertices of $B$ and finally add a vertex $r$ where all vertices in $B$ have an arc to $r$ and all vertices in $A$ have an arc from $r$. This construction is illustrated in Figure \ref{mixed2const}.

We claim this graph has strong diameter 2. To find a path between parts, use $A \to B \to r \to A$ where a path from $A$ to $B$ must use the edges of the complete graph within $B$. To find a path between vertices in $A$, use the matching edge and take a non-matching edge to the desired vertex, and within $B$, simply use the edges in the complete bidirected graph within $B$. Finally, consider the configuration where each vertex in $A$ has 3 pebbles. Then, each vertex has exactly one pebbling move available, and further, the vertices that can receive a pebble each receive a pebble from a unique source. Therefore, each vertex in $B$ will have at most 1 pebble, preventing a pebble from ever reaching $r$. It follows that the pebbling number of $G$ is at least $3k$.
However, the result guarantees that the pebbling number is at most $\frac{3}{2} (2k+1) = 3k + \frac{3}{2}$. Therefore, the pebbling number is necessarily $3k+1$, the maximum possible.
\end{proof}

\section{Classification of Class-1, 2-connected, diameter 2, oriented graphs} \label{Class1}

In this section, we develop a classification for 2-connected, diameter 2 oriented graphs that are Class-1. Our classification is similar to the case for undirected graphs (see \citep{CHH}); however, the restriction to \emph{oriented} directed graphs makes for important, yet subtle distinctions. As a consequence, all other $2$-connected, diameter $2$ oriented graphs must be Class-0. 


%
%
%

\begin{theorem}\label{two3no2}
Let $D$ be a $k$-connected strong diameter $2$ oriented graph for $k\geq 2$. If $D$ is Class $1$, then for any unsolvable configuration $C$ of size $|V(D)|$ we have $|A_2|=0$ and $|A_3|=2$ or $D$ is a directed $3$-cycle.
Furthermore, there are exactly four vertices with $0$ pebbles. 
\end{theorem}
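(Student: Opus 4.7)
The plan is to adapt the partition-and-counting machinery of the proof of Theorem~\ref{noBiN+1} to the setting $|C|=n$. I would begin by setting up the same framework: partition $V(D)=\{r\}\cup A\cup B$ by distance to $r$, subdivide $A,B$ by pebble count into $A_0,\dots,A_3$ and $B_0,\dots,B_3$, with $B_3$ and $B_2$ chosen from the intermediates along the length-two paths to $r$ from $A_3$- and $A_2$-vertices. Proposition~\ref{3orless} and the reasoning underlying Theorem~\ref{noBiN+1} ensure $|A_3|=|B_3|$, $|A_2|=|B_2|$, and that $B_3\cup B_2\cup B_0$ consists entirely of zero-pebble vertices. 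Re-running that arithmetic with $n$ in place of $n+1$ yields
\[
|A_3| \;=\; 1 + |A_0| + |B_0|,
\]
so $|A_3|\ge 1$.

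The next step is to rule out $|A_3|=1$, with $C_3$ as the listed exception (vacuous under $k\ge 2$, since $C_3$ is only $1$-strongly connected). Writing $A_3=\{x\}$ and $B_3=\{y\}$, we have $|A_0|=|B_0|=0$; since $D$ is $2$-strongly connected, $y$ is not a cut vertex and $x$ admits some out-neighbor $w\neq y$. I would run through the possibilities $w\in B_1, B_2, B_3\setminus\{y\}, B_0, A_1, A_2, A_3$, or $w=r$; each either contradicts the partition constraints (for instance, $B_0=\emptyset$, $w=x$ would be a loop, or $w=r$ contradicts $x\in A$) or supplies a pebbling move to $r$ by combining the pebble from $x$ either with an existing pebble on $w$ or with pebbles delivered from a corresponding $A_2$-vertex. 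The only way to avoid every such contradiction is for $D$ to collapse onto $\{x,y,r\}$ with the directed cycle $x\to y\to r\to x$, i.e., $D=C_3$, which is inadmissible under $k\ge 2$. So $|A_3|\ge 2$.

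Next I would rule out $|A_3|\ge 3$ by adapting the pair argument of Theorem~\ref{noBiN+1}. For each pair $\{x_i,x_j\}\subseteq A_3$ with corresponding $y_i,y_j\in B_3$, strong diameter $2$ provides length-two paths $x_i\to q_{ij}\to y_j$ and $x_j\to q_{ji}\to y_i$, and the usual pebble-count analysis forces $q_{ij}\in\{y_i\}\cup A_0\cup B_0$ with $\{q_{ij},q_{ji}\}\neq\{y_i,y_j\}$; consequently each pair is \emph{covered} by some $z\in A_0\cup B_0$ acting as $q_{ij}$ or $q_{ji}$. The crucial observation is that if a single $z$ receives arcs from two distinct $x_i,x_j\in A_3$ and has an out-arc to any $y_k$ with $k\notin\{i,j\}$, then sending one pebble from each of $x_i,x_j$ onto $z$, one from $z$ to $y_k$, and one from the untouched $x_k$ directly to $y_k$ places two pebbles on $y_k$ and pebbles $r$, contradicting unsolvability. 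Working this out across every configuration of $z$'s in- and out-arcs forces the pair covering to be so restricted that $\binom{|A_3|}{2}>|A_0|+|B_0|=|A_3|-1$, a contradiction for $|A_3|\ge 3$. Hence $|A_3|=2$ and $|A_0|+|B_0|=1$.

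The final step is to show $|A_2|=0$, then count. If $a\in A_2$ with $a\to b\to r$ existed, the usual move-combining argument gives $x_1,x_2\not\to b$, so by diameter $2$ each $x_i$ reaches $b$ via $x_i\to v_i\to b$ with $v_i$ forced to zero pebbles; a case analysis on where $v_1,v_2$ lie---together with the pair-argument intermediates $q_{12},q_{21}$ and the fact that only one vertex $z\in A_0\cup B_0$ is available---delivers either two pebbles on $b$ or a bidirected-edge violation, both contradictions. Hence $|A_2|=0$, and the zero-pebble vertices are exactly $\{r\}\cup B_3\cup B_2\cup A_0\cup B_0$, of size $1+2+0+1=4$. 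I expect the main obstacle to be the pair-overlap analysis for $|A_3|\ge 3$: the quadratic bound $2\binom{|A_3|}{2}\le|A_0|+|B_0|$ from Theorem~\ref{noBiN+1} is no longer available when $|C|=n$, since $|A_0|+|B_0|=|A_3|-1$ is now too close to $|A_3|$, so the pebbling consequences of a shared intermediate $z$ must be exploited explicitly rather than by a purely numerical count.
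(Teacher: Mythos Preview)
Your approach differs substantially from the paper's, and the obstacle you flag is real and not resolved by the sketch. The paper does \emph{not} recycle the $A/B$ distance partition of Theorem~\ref{noBiN+1}. Instead it works directly with the zero set $Z$ and, writing $s=|A_2|$, $t=|A_3|$, isolates four pairwise disjoint subsets of $Z\setminus\{r\}$: the intermediate-vertex sets $V(A_2,A_3)$, $V(A_3,A_3)$, $V(A_2,r)$, $V(A_3,r)$, where $V(X,Y)$ denotes vertices with an in-arc from $X$ and an out-arc to $Y$. Unsolvability together with diameter~$2$ forces $|V(A_2,A_3)|\ge st$, $|V(A_3,A_3)|\ge\binom{t}{2}$, $|V(A_2,r)|\ge s$, $|V(A_3,r)|\ge t$, while the pebble count gives $|Z|=s+2t$. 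Combining these yields the single quadratic inequality $t^2-(3-2s)t+2\le 0$, which pins down $s=0$ and $t\in\{1,2\}$ \emph{simultaneously}. Only then is $2$-connectivity invoked, to eliminate $t=1$ via a second internally disjoint $v_t\to r$ path through one-pebble vertices.

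Your plan, by contrast, first tries to fix $|A_3|$ by case analysis (with $|A_2|$ still live) and defers $|A_2|=0$ to a separate argument. The gap is exactly where you put it: for $|A_3|\ge 3$ with $|A_0|+|B_0|=|A_3|-1$, your pair-covering does not obviously produce $\binom{|A_3|}{2}$ distinct intermediates. Already at $|A_3|=3$, a single $z$ with $x_1\to z\to y_2$ and $x_1\to z\to y_3$ can serve both pairs $\{1,2\}$ and $\{1,3\}$ without triggering your ``two distinct sources, foreign target'' contradiction (only $x_1$ feeds $z$), so two intermediates can cover all three pairs---matching $|A_3|-1=2$ rather than exceeding it. The paper sidesteps this by counting intermediates on paths $x_i\to q\to x_j$ into $A_3$ itself (not into $B_3$), where disjointness from the $V(\cdot,r)$ sets and the coupling with $s$ come for free. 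If you want to salvage your route you would need to bring those $x_i\to\cdot\to x_j$ intermediates into the count as well, at which point you are essentially rebuilding the paper's argument inside the $A/B$ framework.
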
 


\begin{proof}
Let $C$ be an unsolvable pebbling configuration with $n$ pebbles placed on the vertices of a directed graph $D$ such that we let $A_2=\{v\in V(D)\,:\, C(v)=2\}$, $A_3=\{v\in V(D)\,:\, C(v)=3\}$, and $Z=\{v\in V(D)\,:\, C(v)=0\}$. For two sets $X,Y\subseteq V(G)$, let $V(X,Y)$ denote the set of vertices with an arc from a vertex $x\in X$ and an arc to a vertex $y\in Y$, $x\neq y$. If $Y=\{v\}$, then we simply use $V(X,v)$ instead of $V(X,\{v\})$.

Let $r$ be the root of $D$, and let $|A_2|=s$ and $|A_3|=t$. We will first find a lower bound on the size of $Z$. This lower bound on $Z$ will, in effect, bound $A_2$ and $A_3$. 
By Proposition~\ref{3orless}, no vertex in $C$ can have $4$ pebbles nor can any vertex have $4$ pebbles after making any number of pebbling moves from $C$.
We consider the sets of vertices in $V(A_2,A_3),V(A_3,A_3),V(A_2,r),V(A_3,r)$. 

If there is a vertex in $V(A_2,A_3)$ with $1$ or more pebbles on it then we can make pebbling moves to get $4$ pebbles on some vertex, a contradiction by Proposition~\ref{3orless}. Similarly, all vertices in $V(A_2\cup A_3,A_3)$ have $0$ pebbles. 
Since there cannot be two vertices in $A_2$ with the same out-neighbor in $V(A_2,A_3)$ and to ensure that the distance from a vertex in $A_2$ to a vertex in $A_3$ is at most $2$, $|V(A_2,A_3)|\geq st$. 
If $V(A_2,A_3)\cap V(A_3,A_3)\neq \varnothing$ then there would be a way to get $4$ pebbles on some vertex. 
If $V(A_2,r)\cap V(A_3,r)\neq \varnothing$, $V(A_3,A_3)\cap V(A_3,r)\neq \varnothing$, $V(A_3,A_3)\cap V(A_2,r)\neq \varnothing$ , $V(A_2,A_3)\cap V(A_3,r)\neq \varnothing$, $V(A_2,A_3)\cap V(A_2,r)\neq \varnothing$ then it would be possible to move two pebbles onto a vertex with an out-arc to $r$, thus we can pebble $r$, a contradiction. Thus $V(A_2,A_3),V(A_3,A_3),V(A_2,r),V(A_3,r)$ are all mutually disjoint. To ensure $r$ cannot receive a pebble from $A_2$ or $A_3$, no vertex in $V(A_2,r)\cup V(A_3,r)$ can have one or more pebbles and $|V(A_2,r)|\geq s$, $|V(A_3,r)|\geq t$. For the same reasons, $V(A_3,A_3)$ cannot contain any vertex with one or more pebbles and $|V(A_3,A_3)|\geq \binom{t}{2}$. 
Since $G$ is Class-1, then the number of pebbles in $G$ is $n$ where $A_1$ is the set of vertices with exactly one pebble. Thus the number of vertices in $G$ can also be found as $|Z|+|A_1|+|A_2|+|A_3|$, and so $|A_1|+2|A_2|+3|A_3|=|Z|+|A_1|+|A_2|+|A_3|$ implies that $|Z|+|A_2|+2|A_3|$. 
Since all of the vertices in $V(A_2,A_3)\cup V(A_3,A_3)\cup V(A_2,r)\cup V(A_3,r)\subseteq Z$ and all of these sets are disjoint,
\begin{align}\label{zinequality}
st+\binom{t}{2}+s+t+1\leq |Z|=s+2t.
\end{align}
From \eqref{zinequality}, we can derive the inequality
\begin{align}\label{zinequality2}
t^2-(3-2s)t+2\leq 0.
\end{align}
Hence $3\geq2s$, and so $s\leq 1$. 

If $s=1$ then $t^2-t+2=\left(t-\frac{1}{2}\right)^2+\frac{7}{4}>0$ which contradicts \eqref{zinequality2}. So we suppose that $s=0$. Then $t^2-3t+2\leq 0$ implies $t=1$ or $2$. Further suppose that $t=1$. Then $|Z|=2$ and without loss of generality we suppose $v_t$ is the one vertex in $A_3$ and $Z=\{v,r\}$. Necessarily, $v_t\to v$ and $v\to r,$ are arcs in $D$. 
Since $D$ is $2$-connected, there are two disjoint directed paths from $v_t$ to $r$. The other path will contain only vertices from $A_1$ and both $v_t,r$. But then we can pebble $r$ with the path through $A_1$, a contradiction. Thus, our result follows.
\end{proof}


We will next define the family of $2$-connected strong diameter $2$ Class $1$ directed graphs, $\F$. We will show that these are the only $2$-connected strong diameter $2$ graphs that are Class $1$. 
See Figure~\ref{class1graphs} for a pictorial representation of the family $\F$. 

\begin{figure}[htb]
\begin{center}
\includegraphics{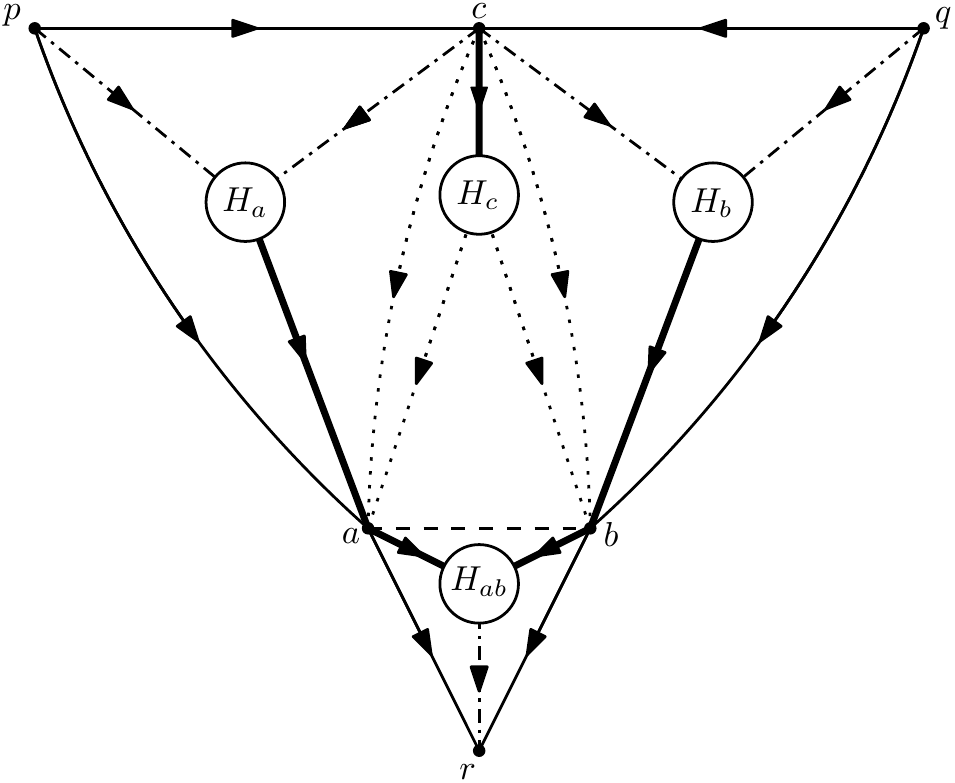}
\end{center}
\caption{Schematic diagrams for the necessary arcs for oriented graphs in $\F$. Arbitrary upward arcs are allowed. }\label{class1graphs}
\end{figure}

\begin{defi}
Let $\mathcal{F}$ be the set of 2-connected oriented graphs with strong diameter 2 such that 
\begin{itemize}
\item there is a 6-cycle with orientation, $p \to c \leftarrow q \to b \to r \leftarrow a \leftarrow p$;
\item any directed path from $p$ to $r$ either contains $a$ or both $c$ and  $b$;
\item any directed path from $q$ to $r$ either contains $b$ or both $c$ and  $a$; and
\item any directed path from $c$ to $r$ either contains $a$ or $b$.
\end{itemize}
\end{defi}

In Figure~\ref{class1graphs}, solid lines are arcs and bold lines mean all the possible arcs between two sets of vertices are present. The dashed line arcs indicate that there is an arc $a\to b$, $b\to a$, or possible none depending on the adjacencies between $H_c\cup \{c\}$ and $\{a,b\}$. 
The dotted arcs from $c$ to $a$ and $b$ indicate that there is either an arc $c\to a$ or $c\to b$. Similarly for each vertex in $H_c$ there is either an arc to $a$ or $b$. 
The dashed-dotted arcs indicate there are some arcs between the indicated vertices but possibly not all arcs. 
There must be enough edges to satisfy the $2$-connected property of these graphs in $\mathcal{F}$. We give justification for these arcs in Section~\ref{F}. It is possible to have arcs going in the ``upward'' direction in Figure~\ref{class1graphs}.

Furthermore, we define the following for graphs in $\mathcal{F}$:

\begin{itemize}
\item[] $H_a$ is the set of all intermediary vertices $v_a$ such that there is a directed path from $p$ to $a$ without using vertex $c$.
\item[] $H_b$ is the set of all intermediary vertices $v_b$ such that there is a directed path from $q$ to $b$ without using vertex $c$.
\item[] $H_c$ is the set of all intermediary vertices $v_c \not\in H_a \cup H_b$ such that there is a directed path from $c$ to $a$ or from $c$ to $b$.
\item[] $H_{ab}$ is the set of all intermediary vertices $v_{ab}$ such that there is a directed path from $a$ or $b$ to $r$ that does not contain the other.
\end{itemize}

\begin{theorem}
If $D \in \mathcal{F}$, then $D$ is Class-1.
\end{theorem}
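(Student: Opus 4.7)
The plan is to exhibit an explicit $r$-unsolvable configuration $C$ of size exactly $n$; combined with Theorem~\ref{noBiN+1}, this forces $\pi(D) = n+1$, so $D$ is Class-1. Guided by Theorem~\ref{two3no2}, I would set $C(p) = C(q) = 3$, $C(r) = C(a) = C(b) = C(c) = 0$, and $C(v) = 1$ on every other vertex, placing $6 + (n-6) = n$ pebbles. This configuration has $|A_3| = 2$, $|A_2| = 0$, and exactly four zero-pebble vertices, matching the shape forced by Theorem~\ref{two3no2}. Proposition~\ref{3orless} will be invoked throughout to keep intermediate configurations honest.

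To show $C$ is $r$-unsolvable, I would first reduce to proving that no sequence of pebbling moves ever places two pebbles on $a$ or two pebbles on $b$. The third $\mathcal{F}$-axiom implies that no in-neighbor of $r$ lies outside $\{a, b\} \cup H_{ab}$; further, since every vertex of $H_{ab}$ is accessible only through $a$ or $b$ by definition, producing two pebbles on an $H_{ab}$-vertex is strictly costlier than producing two pebbles on $\{a, b\}$. Next, using the $p$- and $q$-axioms, I would establish that $q \not\to v_a$ for any $v_a \in H_a$ and $q \not\to v_c$ for any $v_c \in H_c$ whose one outgoing arc is to $a$, because each such arc would give a $q$-to-$r$ path containing $a$ but missing both $b$ and $c$; symmetric statements hold for $p$. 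The consequence is that $q$'s pebbles can reach $a$ only via $c$ or via $b$, and symmetrically for $p$ reaching $b$.

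With these reductions in place, the proof becomes a pebble-budget accounting. From $p$'s three pebbles one can produce at most a single pebble on $a$ (either by the direct move $p \to a$ or by a length-two route $p \to v_a \to a$ with $v_a \in H_a$), leaving one idle pebble on $p$; likewise $q$ produces at most one pebble on $b$. The only remaining attack routes $q$'s pebbles to $a$ via $c$, but doubling up at $c$ requires spending both the $p \to c$ and $q \to c$ moves, which precludes the direct $p \to a$ move and still produces only one pebble at $a$. In every branching of the attack, each of $a$ and $b$ ends up with at most one pebble, so no move $a \to r$ or $b \to r$ is ever available. The main obstacle is formalizing this case analysis in the presence of the ``upward'' and sideways arcs permitted in Figure~\ref{class1graphs}, which a priori could route pebbles inside $H_a \cup H_b \cup H_c \cup H_{ab}$ in counterintuitive ways; I would control these by a pair of weight functions $w_a$ and $w_b$ supported on the ``$a$-reachable'' and ``$b$-reachable'' portions of $V(D)$, chosen so that $\sum_v C(v) w_a(v)$ is nonincreasing under any pebbling move and starts strictly below $2 w_a(a)$, and symmetrically for $w_b$, forcing $a$ and $b$ to never carry two pebbles simultaneously.
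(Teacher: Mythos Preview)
Your configuration and the path-based case analysis are exactly what the paper does: place three pebbles on $p$ and $q$, zeros on $a,b,c,r$, ones elsewhere, and then argue from the $\mathcal{F}$-axioms that neither $a$ nor $b$ can ever accumulate two pebbles. The paper's proof is even briefer than your sketch and simply invokes the path-containment clauses of the definition of $\mathcal{F}$ directly, so up through your ``pebble-budget accounting'' paragraph you are on the same track and essentially correct.

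The genuine gap is the weight-function formalization you propose at the end. In a strongly connected diameter-$2$ digraph, the monotonicity constraint $w_a(v)\le 2\,w_a(u)$ for every arc $u\to v$ forces $w_a(v)\ge w_a(a)/4$ for \emph{every} vertex $v$ (two backward steps from $a$ reach everything). Hence $\sum_v C(v)\,w_a(v)\ge \tfrac{1}{4}\,n\,w_a(a)$, which is far above $2\,w_a(a)$ once $n\ge 9$; in particular already the three pebbles on $p$ (with $p\to a$, so $w_a(p)\ge w_a(a)/2$) contribute $\tfrac{3}{2}w_a(a)$, and the single pebbles on the remaining $n-6$ vertices push the total well past $2w_a(a)$. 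So no such pair $(w_a,w_b)$ exists, and this device cannot certify that $a$ (or $b$) never holds two pebbles. You should drop the weight functions and instead finish exactly as the paper does: argue purely from the three path-containment axioms that (i) $p$'s pebbles can reach $b$ only through $a$ or $c$, (ii) $q$'s pebbles can reach $a$ only through $b$ or $c$, and (iii) anything routed through $c$ still must pass through $a$ or $b$ before $r$; a short exhaustive check of the at most two useful moves out of each of $p,q$ then shows $a$ and $b$ never simultaneously exceed one pebble. (A minor side remark: your claim that every in-neighbor of $r$ lies in $\{a,b\}\cup H_{ab}$ does not follow from the axioms as stated; the axioms constrain only paths originating at $p$, $q$, or $c$, which is in fact all you need.)
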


\begin{proof}
By Theorem~\ref{two3no2}, there are two vertices with $3$ pebbles, say $p$ and $q$. Let $a$, $b$, $c$, and $r$ have $0$ pebbles. All other vertices have exactly one pebble. It is clear we cannot make pebbling moves from $p$ to $r$ without using the pebbles on $q$ and vice versa. 
Thus we need to get two pebbles on either $a$ or $b$. Any directed path from $p$ to $b$ must contain $a$ or $c$, so there is no set of pebbling moves that will move the pebbles from $p$ to $b$. Similarly there is no way to pebble $a$ using pebbling moves starting at $q$. The only vertices that can be pebbled using pebbling moves starting at $p$ and $q$ is $c$. 
Since any path from $c$ to $r$ must go through $a$ or $b$, we can get at most one pebble on $a$ or $b$. Thus we cannot pebble $r$.
\end{proof}

\subsection{Properties of $\mathcal{F}$} \label{F}


Previously, we defined the class of oriented graphs $\mathcal{F}$. However, this definition leads to a highly structured graph beyond the definition. In the results below, we derive many further properties of $\mathcal{F}$.

\begin{prop}
If $D \in \mathcal{F}$, then for every $v_a \in H_a$, there is an arc $v_a \to a$, and likewise, for every $v_b \in H_b$, there is an arc $v_b \to b$.
\end{prop}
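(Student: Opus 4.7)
The plan is to leverage the strong diameter 2 condition together with the structural properties in the definition of $\mathcal{F}$ to force the arc $v_a \to a$. Given $v_a \in H_a$, by definition there is a simple directed path $P$ from $p$ to $a$ avoiding $c$ that passes through $v_a$; write $P_1$ for the prefix of $P$ from $p$ to $v_a$. Because $P$ is simple and ends at $a$, the prefix $P_1$ does not contain $a$. I will also first observe that $r$ cannot lie on $P$: if it did, the prefix of $P$ ending at $r$ would be a directed $p$-to-$r$ path avoiding $c$ and not containing $a$, contradicting the defining property that every directed path from $p$ to $r$ contains $a$ or both $c$ and $b$. In particular $r \notin P_1$.

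Next, I would invoke strong diameter 2 to get a directed path from $v_a$ to $r$ of length at most 2 and split into two cases. If there were a direct arc $v_a \to r$, then $P_1$ followed by this arc would be a simple $p$-to-$r$ path that avoids $c$ and avoids $a$, again contradicting the property. So the shortest $v_a$-to-$r$ path has length exactly 2, of the form $v_a \to w \to r$ for some vertex $w$. It remains to show $w = a$.

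To identify $w$, I would first rule out $w = c$: if $c \to r$ were an arc, the single-edge path from $c$ to $r$ would contain neither $a$ nor $b$, contradicting the property that every $c$-to-$r$ path contains $a$ or $b$. Next, I need $w \notin P_1$, for otherwise a shortcut of $P_1 \to w \to r$ yields a simple $p$-to-$r$ path that avoids $c$ and misses $a$. Hence $P_1 \to w \to r$ is itself a simple $p$-to-$r$ path avoiding $c$, and the defining property forces it to contain $a$; since $P_1$ does not contain $a$ and $r \neq a$, the only possibility is $w = a$, giving the required arc $v_a \to a$. The argument for $H_b$ is entirely symmetric, swapping $(p,a)$ with $(q,b)$ and using the second defining property.

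The main obstacle I anticipate is bookkeeping the simplicity of the constructed paths, since the defining properties of $\mathcal{F}$ are stated for directed paths (i.e.\ simple paths), and the case $w = c$ must be excluded separately via the third defining property on paths from $c$ to $r$. Apart from these checks, the argument is a direct unwinding of the forbidden-path conditions.
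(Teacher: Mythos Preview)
Your proof is correct and follows essentially the same approach as the paper: use strong diameter~2 to produce a short $v_a$-to-$r$ path, then invoke the forbidden-path conditions on $p$-to-$r$ paths to force the intermediate vertex to be $a$. Your version is in fact more careful than the paper's own argument, which does not explicitly treat the direct arc $v_a \to r$, does not separately rule out $w = c$, and does not address simplicity of the concatenated path; you handle all three points cleanly.
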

\begin{proof}
By symmetry, we will prove that for every $v_a \in H_a$, there is an arc $v_a \to a$. Suppose there is a vertex $v_a \in H_a$ without an arc $v_a \to a$. Then, we claim there is no directed path from $v_a$ to $r$ of length 2 or less. Indeed if $v_a \to w \to r$ for some vertex $w \ne a$, then there is a path $a \to \cdots \to v_a \to w \to r$ that does not contain $a$ nor $c$, violating the definition of $\mathcal{F}$.
\end{proof}

\begin{prop}
If $D \in \mathcal{F}$, then for every $v_a \in H_a$, there is no arc $v_a \to b$, and likewise, for every $v_b \in H_b$, there is no arc $v_b \to a$.
\end{prop}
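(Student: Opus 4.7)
The plan is to derive a contradiction directly from the structural axioms defining $\mathcal{F}$, using the fact that the defining $6$-cycle already supplies an arc $b\to r$. By the symmetry between $(a, H_a)$ and $(b, H_b)$ built into the definition of $\mathcal{F}$, it suffices to prove the statement for $v_a \in H_a$; the other half then follows by swapping the roles of $a,p$ with $b,q$ throughout.

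First, I would unpack the hypothesis $v_a \in H_a$: there exists a directed path
\[
P\colon\; p \to \cdots \to v_a \to \cdots \to a
\]
that avoids $c$. Treating $P$ as a simple path (as is standard, and as is implicit in calling $v_a$ an ``intermediary'' vertex, so $v_a \neq p, a$), the prefix $P'\colon p \to \cdots \to v_a$ avoids both $c$ (since $P$ does) and $a$ (since $a$ only appears as the endpoint of $P$).

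Next, suppose for contradiction that the arc $v_a \to b$ exists. Using the arc $b \to r$ from the defining $6$-cycle $p\to c\leftarrow q\to b\to r\leftarrow a\leftarrow p$, I would concatenate:
\[
P'' \;\colon\; p \to \cdots \to v_a \to b \to r.
\]
This is a directed path from $p$ to $r$ that contains neither $a$ nor $c$. However, the second bullet of the definition of $\mathcal{F}$ demands that any directed $p$--$r$ path contains $a$, or contains both $c$ and $b$. Since $P''$ contains $b$ but not $c$, and does not contain $a$, we reach a contradiction, ruling out the arc $v_a \to b$.

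The argument is short and the only real subtlety is the simplicity of $P$ and the status of $v_a$ as an interior vertex (so that the prefix $P'$ genuinely avoids $a$); both are standard conventions I would state briefly at the start. The symmetric statement for $v_b \in H_b$ is obtained by the analogous argument, now invoking the third bullet of the definition (paths from $q$ to $r$ must contain $b$, or both $c$ and $a$) together with the arc $a \to r$ from the $6$-cycle.
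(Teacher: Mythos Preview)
Your proof is correct and follows essentially the same approach as the paper: assume $v_a \to b$, concatenate the $c$-avoiding prefix $p \to \cdots \to v_a$ with $b \to r$, and observe that the resulting $p$--$r$ path contains neither $a$ nor $c$, contradicting the second axiom of $\mathcal{F}$. You are slightly more explicit than the paper in justifying why the prefix avoids $a$ and in invoking the arc $b \to r$ from the defining $6$-cycle, but the argument is the same.
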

\begin{proof}
By symmetry, we will prove for every $v_a \in H_a$, there is no arc $v_a \to b$. Suppose $v_a \to b$, then there a path $p \to \cdots \to v_a \to b \to r$ that does not contain $a$ nor both $b$ and $c$, a contradiction.
\end{proof}

\begin{prop} \label{8}
If $D \in \mathcal{F}$, then for every $v_{ab} \in H_{ab}$, there is an arc $a \to v_{ab}$ and $b \to v_{ab}$.
\end{prop}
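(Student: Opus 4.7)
The plan is to exploit the structural conditions on directed $p$-to-$r$, $q$-to-$r$, and $c$-to-$r$ paths built into the definition of $\mathcal{F}$. Fix $v_{ab} \in H_{ab}$, and without loss of generality assume $v_{ab}$ lies on a directed path $a \to \cdots \to v_{ab} \to P' \to r$ avoiding $b$; the symmetric situation with $a$ and $b$ swapped is handled identically with $q$ and $p$ exchanged.

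As a preliminary step, I will show that the tail $P'$ contains none of $a$, $b$, or $c$. Absence of $a$ follows because a directed path does not revisit vertices; absence of $b$ is the standing assumption. For $c$: if $c$ appeared on $P'$, the sub-path from $c$ to $r$ along $P'$ would be a directed $c$-to-$r$ path containing neither $a$ nor $b$, contradicting the $\mathcal{F}$-condition on paths from $c$ to $r$.

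To establish $a \to v_{ab}$, I will invoke strong diameter $2$ on the ordered pair $(p, v_{ab})$ to obtain a directed path of length at most $2$ from $p$ to $v_{ab}$, and splice it onto $P'$ to form a directed $p$-to-$r$ path. Since each of $p$, $v_{ab}$, and every vertex of $P'$ avoids $\{a, b, c\}$, any occurrence of $\{a, b, c\}$ on the resulting path must lie at the (at most one) intermediate vertex $u$. The $\mathcal{F}$-condition requires this path to contain $a$ or both of $\{b, c\}$, and a single vertex $u$ cannot realize the latter. This instantly rules out the direct case $p \to v_{ab}$ and forces $u = a$ in the length-$2$ case $p \to u \to v_{ab}$, yielding $a \to v_{ab}$. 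The proof of $b \to v_{ab}$ is analogous, replacing $p$ by $q$ and invoking the $q$-to-$r$ condition, which now demands $b$ or both of $\{a, c\}$; the same single-vertex argument forces $u = b$.

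The main obstacle I anticipate is the preliminary step establishing $c \notin P'$: if $c$ were permitted on $P'$, the forcing argument would collapse because $u$ would no longer be the sole possible location for vertices in $\{a, b, c\}$, and in the $p$-case one could not conclude $u = a$. The remainder is clean bookkeeping that leverages the conjunctive form (``both $c$ and $b$'', ``both $c$ and $a$'') of the $\mathcal{F}$-conditions, which is precisely what prevents a single intermediate vertex from satisfying them and pins down the required arc.
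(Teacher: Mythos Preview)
Your proof is correct. The key difference from the paper's argument is the choice of pivot vertex for the diameter-$2$ step: the paper selects an auxiliary $h \in H_a$ and applies strong diameter $2$ to the pair $(h, v_{ab})$, whereas you apply it directly to $(p, v_{ab})$ and $(q, v_{ab})$. Your route is somewhat cleaner, since it avoids any appeal to $H_a$ being nonempty and handles both conclusions $a \to v_{ab}$ and $b \to v_{ab}$ with the same tail $P'$. Your preliminary verification that $c \notin P'$ is also a point the paper leaves implicit; without it, the single-intermediate-vertex forcing argument would not go through. One small remark: the spliced object $p \to u \to v_{ab} \to P' \to r$ is a priori only a walk, but since every vertex of the walk other than $u$ already avoids $\{a,b,c\}$, any simple $p$--$r$ path extracted from it does as well, so the $\mathcal{F}$-conditions apply and your conclusion stands.
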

\begin{proof}
By symmetry, we will prove that for every $v_{ab} \in H_{ab}$, there is an arc $a \to v_{ab}$. Suppose there is a vertex $v_{ab} \in H_{ab}$ without an arc $a \to v_{ab}$. Choose a vertex $h \in H_{a}$. Then, we claim there is no directed path from $h$ to $v_{ab}$ of length 2 or less. Indeed, if $h \to w \to v_{ab}$ for some vertex $w \ne a$, then there is a path $p \to \cdots \to h \to w \to v_a \to \cdots \to r$ that does not contain $a$ nor $c$, violating the definition of $\mathcal{F}$.
\end{proof}

\begin{prop}\label{ctoaorctob}
For every $v_c \in H_c$, there is an arc $v_c \to a$ or $v_c \to b$.
\end{prop}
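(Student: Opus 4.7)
The plan is to derive a contradiction from the assumption that some $v_c \in H_c$ has no arc $v_c \to a$ and no arc $v_c \to b$. The strategy is to first use strong diameter 2, together with the fact that $v_c \notin H_a \cup H_b$, to force $c \to v_c$ to be a direct arc; then to chain it with a short $v_c$-to-$r$ path to produce a $c$-to-$r$ path avoiding both $a$ and $b$, violating the $\mathcal{F}$-axiom for $c$-to-$r$ paths.

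For the first step, since $v_c \in H_c$ I would (by symmetry) assume $v_c$ lies on a simple $c$-to-$a$ path, and let $Q$ denote the tail $v_c \to \cdots \to a$, which avoids $c$ because the whole path is simple. Strong diameter 2 supplies a path $\pi$ from $p$ to $v_c$ of length at most 2. If $\pi$ avoided $c$, then concatenating $\pi$ with $Q$ would give a $p$-to-$a$ walk avoiding $c$, and, after a careful simplification that preserves $v_c$, a simple $p$-to-$a$ path through $v_c$ avoiding $c$, placing $v_c$ in $H_a$ and contradicting the hypothesis. Hence $\pi$ must contain $c$; since $\pi$ has at most two arcs and neither $p$ nor $v_c$ equals $c$, the only possibility is $\pi = p \to c \to v_c$, so that $c \to v_c$ is a direct arc.

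For the second step, strong diameter 2 yields a $v_c$-to-$r$ path $\sigma$ of length at most 2. Because $v_c$ has no arc to $a$ or $b$, the first arc of $\sigma$ does not enter $a$ or $b$, so $\sigma$ is either $v_c \to r$ or $v_c \to w \to r$ with $w \notin \{a, b\}$; moreover $w \ne c$, since a direct arc $c \to r$ would itself violate the $c$-to-$r$ axiom (a length-$1$ path with no interior vertex cannot contain $a$ or $b$). Prepending the arc $c \to v_c$ then produces a simple directed path from $c$ to $r$ whose vertex set is disjoint from $\{a, b\}$, which is the desired contradiction.

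The main obstacle is the simplification step of $\pi \cdot Q$ into a simple $p$-to-$a$ path that still contains $v_c$. Naively, if a non-$v_c$ vertex of $\pi$ happens to lie on $Q$, shortcutting could bypass $v_c$ and break the $H_a$ argument. One way to avoid this is to choose $Q$ as the tail of a \emph{shortest} $c$-to-$a$ path through $v_c$ and exploit minimality; an alternative is to run the symmetric argument through $q$ using a witnessing $c$-to-$b$ path for $v_c \in H_c$, so that at least one of the two setups avoids the coincidence. Once $c \to v_c$ is established, the concluding contradiction via the $c$-to-$r$ axiom is essentially a one-line calculation.
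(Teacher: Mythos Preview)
Your second step is the paper's entire proof. The paper argues in one line: if neither $v_c \to a$ nor $v_c \to b$, then diameter~2 forces a path $v_c \to w \to r$ with $w \ne a,b$; prepending the path $c \to \cdots \to v_c$ that is already supplied by $v_c \in H_c$ yields a directed $c$-to-$r$ path avoiding both $a$ and $b$, contradicting the $\mathcal{F}$ axiom.

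Your first step---showing that $c \to v_c$ is a \emph{direct} arc via a $p$-to-$v_c$ path and the exclusion $v_c \notin H_a$---is not in the paper and is precisely where your acknowledged obstacle sits. The simplification issue you raise (whether a shortcut of the walk $\pi \cdot Q$ still passes through $v_c$) is real, and neither of your proposed fixes is carried out: ``shortest path through $v_c$'' does not obviously prevent an early vertex of $\pi$ from landing on $Q$ past $v_c$, and switching to the $q$-side only trades one instance of the same problem for another. Since the paper never needs $c \to v_c$ to be a single arc---membership in $H_c$ already hands you a $c$-to-$v_c$ path---you can delete step~1 entirely and the remaining argument is exactly the paper's.
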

\begin{proof}
Suppose there is no arc from $v_c \to a$ nor $v_c \to b$, then there must be a vertex $w \ne a, b$ such that $v_c \to w \to r$. In which case, there is a path $c \to \cdots \to v_c \to w \to r$ that avoids $a$ and $b$, which violates the definition of $\mathcal{F}$.
\end{proof}

\begin{prop}
If $D \in \mathcal{F}$, then for every $v_{a} \in H_{a}$ and every $v_c \in H_c$, there is no arc $v_a \to v_{c}$. Likewise, for every $v_{b} \in H_{a}$ and every $v_c \in H_c$, there is no arc $v_b \to v_{c}$.
\end{prop}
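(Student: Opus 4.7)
My plan is to argue by contradiction. Suppose $v_a \to v_c$ with $v_a \in H_a$ and $v_c \in H_c$. I aim to derive a contradiction in one of two ways: either show that $v_c$ in fact lies in $H_a$ (violating $H_a \cap H_c = \varnothing$, which is built into the definition of $H_c$), or construct a simple directed path from $p$ to $r$ that avoids both $a$ and $c$, contradicting the defining property of $\mathcal{F}$.

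First I would use $v_a \in H_a$ to choose a simple directed path $P$ from $p$ to $a$ through $v_a$ avoiding $c$, and take its prefix $P'$ from $p$ to $v_a$. This prefix avoids $c$ (inherited from $P$) and avoids $a$, because $a$ appears only once in the simple path $P$, at its terminus. Appending the arc $v_a \to v_c$ to $P'$ yields a walk from $p$ to $v_c$ which, after deleting any cycle introduced, gives a simple path $Q$ from $p$ to $v_c$ still avoiding $c$ and $a$; the avoidances survive because both $c$ and $a$ were absent from $P'$ in the first place, and $v_c \ne c, a$. This concatenation step is the key technical move, and the bookkeeping around cycle deletion is the place where care is needed.

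Next I would apply Proposition~\ref{ctoaorctob} to split into the cases $v_c \to a$ and $v_c \to b$. In the first case, $Q$ followed by the arc $v_c \to a$ is a simple $p$-to-$a$ path avoiding $c$ with $v_c$ as an interior vertex, placing $v_c \in H_a$ and contradicting $v_c \in H_c$. In the second case, $Q$ followed by the arcs $v_c \to b \to r$ is a walk from $p$ to $r$ avoiding both $a$ and $c$; reducing to a simple subpath preserves both avoidances, and this contradicts the condition in the definition of $\mathcal{F}$ that every directed path from $p$ to $r$ contain $a$ or both $c$ and $b$.

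The statement about $v_b \in H_b$ then follows by the same argument with the symmetric roles $p \leftrightarrow q$, $a \leftrightarrow b$, and $H_a \leftrightarrow H_b$, using the analogous defining condition on directed paths from $q$ to $r$. The main obstacle I foresee is entirely the bookkeeping at the concatenation steps, but since the forbidden vertices never enter the intermediate walks, extracting a simple subpath with the required properties is routine rather than genuinely difficult.
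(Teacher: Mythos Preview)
Your proposal is correct and follows essentially the same route as the paper's proof: assume an arc $v_a \to v_c$, invoke Proposition~\ref{ctoaorctob} to split into the cases $v_c \to a$ and $v_c \to b$, and derive in the first case that $v_c \in H_a$ (contradicting the disjointness built into the definition of $H_c$) and in the second case a directed $p$-to-$r$ path avoiding both $a$ and $c$ (contradicting the definition of $\mathcal{F}$). Your version is more careful than the paper about the bookkeeping of concatenating paths and extracting simple subpaths, but the underlying argument is the same.
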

\begin{proof}
By symmetry, we will prove that for every $v_{a} \in H_{a}$ and every $v_c \in H_c$, there is no arc $v_a \to v_{c}$. Suppose there is an arc $v_a \to v_c$. Then, by Proposition \ref{ctoaorctob}, there is an arc from $v_c \to a$ or from $v_c \to b$. If $v_c \to a$, then it follows that $v_c \in H_a$ as there is a path $a \to h_a \to h_c \to r$, and hence, it must be the case that $v_c \to b$. Therefore, there is a path $p \to \cdots \to v_a \to v_c \to b \to r$ which goes though $b$ that does not go through $a$ nor $c$, violating the definition of $\mathcal{F}$.
\end{proof}

\begin{prop}
If $D \in \mathcal{F}$, then either $c \to a$ or $c \to b$.
\end{prop}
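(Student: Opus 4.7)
The plan is to combine the strong diameter 2 hypothesis with the fourth bullet in the definition of $\mathcal{F}$, which restricts every directed $c$-to-$r$ path to contain $a$ or $b$.

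First I would rule out a direct arc $c \to r$. If such an arc existed, then the one-arc path $c \to r$ would be a directed path from $c$ to $r$ whose vertex set is $\{c,r\}$, containing neither $a$ nor $b$; this would violate the defining condition that every directed $c$-to-$r$ path contains $a$ or $b$. Hence there is no arc $c \to r$, so the strong distance from $c$ to $r$ is at least $2$.

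Since $D$ has strong diameter $2$, there must be a directed path of length exactly $2$ from $c$ to $r$, say $c \to w \to r$ for some vertex $w \notin \{c,r\}$. Applying the fourth bullet of the definition of $\mathcal{F}$ once more, this length-$2$ path must contain $a$ or $b$; its only internal vertex is $w$, so $w \in \{a,b\}$. In either case, this gives the desired arc: $c \to a$ or $c \to b$.

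There is no real obstacle here; the argument is essentially a direct unwinding of the definition and takes only a few lines. The only subtlety to flag for the reader is the standard convention that "a directed path contains a vertex" is being read as vertex-containment including endpoints, so that a path with internal vertex set required to meet $\{a,b\}$ is forced to be of length at least $2$ and have its unique internal vertex in $\{a,b\}$.
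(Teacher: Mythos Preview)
Your proposal is correct and follows essentially the same approach as the paper: both arguments use the strong diameter~$2$ condition to produce a short $c$-to-$r$ path and then invoke the fourth bullet of the definition of $\mathcal{F}$ to force the intermediate vertex into $\{a,b\}$. The paper phrases it as a contradiction (assume neither arc exists, so the guaranteed path $c\to r$ or $c\to w\to r$ with $w\notin\{a,b\}$ violates the definition), while you argue directly, but the content is identical.
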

\begin{proof}
Suppose not, then either $c \to r$ or $c \to w \to r$ for some vertex $w \ne a,b$. However, these both violate the definition of $\mathcal{F}$.
\end{proof}

\begin{prop}
If $D \in \mathcal{F}$, then for any $v_c \in H_c$, either $v_c \to a$ or $v_c \to b$.
\end{prop}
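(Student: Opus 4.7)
The plan is to argue by contradiction, paralleling the strategy used for Proposition~\ref{ctoaorctob}. Suppose, for the sake of contradiction, that there exists $v_c \in H_c$ such that neither $v_c \to a$ nor $v_c \to b$ is an arc in $D$. Since $D$ has strong diameter $2$, there must be a directed path from $v_c$ to $r$ of length at most $2$, either a direct arc $v_c \to r$ or of the form $v_c \to w \to r$ for some intermediate vertex $w$. By our standing assumption, in the latter case $w \notin \{a,b\}$.

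Next I would concatenate this short $v_c$-to-$r$ path with a suitable $c$-to-$v_c$ path to form a directed $c$-to-$r$ path that avoids both $a$ and $b$. Such a path immediately contradicts the defining property of $\mathcal{F}$, namely that every directed path from $c$ to $r$ must go through $a$ or $b$, completing the proof.

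The only real technical subtlety, and thus the main obstacle, is producing a $c$-to-$v_c$ path that itself avoids both $a$ and $b$ (otherwise the concatenated path might secretly visit one of them, and the contradiction would collapse). I would extract such a path from the definition of $H_c$: $v_c$ appears as an intermediary vertex on some directed path $c \to \cdots \to v_c \to \cdots \to x$ with $x \in \{a, b\}$. Since this is a simple directed path (no repeated vertices) and its unique endpoint is $x \in \{a,b\}$, the prefix from $c$ up to $v_c$ cannot contain $a$ or $b$. Concatenating this prefix with the $v_c$-to-$r$ path identified in the first step yields the desired $c$-to-$r$ path avoiding $\{a,b\}$, producing the contradiction and establishing the proposition.
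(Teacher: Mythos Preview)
Your approach is essentially identical to the paper's: assume no arc $v_c\to a$ or $v_c\to b$, use strong diameter~2 to obtain a short $v_c$-to-$r$ path avoiding $\{a,b\}$, and prepend a $c$-to-$v_c$ path coming from the definition of $H_c$ to contradict the requirement that every $c$-to-$r$ path passes through $a$ or $b$. The paper's proof is in fact terser than yours and does not even pause at the subtlety you flag.

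One small caution about that subtlety: your justification that ``the prefix from $c$ up to $v_c$ cannot contain $a$ or $b$'' is not fully supported by the sentence that follows it. A simple path from $c$ to $x\in\{a,b\}$ certainly avoids $x$ in its interior, but nothing in that argument rules out the \emph{other} element of $\{a,b\}$ appearing before $v_c$. The paper's proof silently assumes the same thing, so you are no worse off than the original; but if you want the step to be airtight you would need to invoke more of the structure of $\mathcal{F}$ (or choose the $c$-to-$x$ path minimally) rather than relying solely on simplicity of the path.
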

\begin{proof}
Suppose not, then either $v_c \to r$ or $v_c \to w \to r$ for some vertex $w \ne a,b$. This means there is a path $c \to \cdots v_c \to r$ (for the first case) or a path $c \to \cdots \to v_c \to r$ (in the second case) neither of which go through either $a$ or $b$. Hence, these cases both violate the definition of $\mathcal{F}$.
\end{proof}

\begin{prop} 
If $D \in \mathcal{F}$, if $c \not \to a$, then $b \to a$, and likewise, if $c \not \to b$, then $a \to b$.
\end{prop}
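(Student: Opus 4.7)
The plan is to exploit the strong diameter $2$ property to force a short directed path from $q$ to $a$, and then use the structural definition of $\mathcal{F}$ (specifically, the constraint on paths from $q$ to $r$) to pin down the intermediate vertex. By the left-right symmetry in the definition of $\mathcal{F}$, it suffices to prove the first implication: assume $c \not\to a$ and show $b \to a$.

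First, I would invoke strong diameter $2$ to obtain a directed path from $q$ to $a$ of length at most $2$. I want to rule out the length-$1$ case, so I would consider the candidate path $q \to a \to r$ (using the arc $a \to r$ from the $6$-cycle). This path contains neither $b$ nor $c$, which directly contradicts the defining property that any directed path from $q$ to $r$ contains $b$ or both $c$ and $a$. Hence the path from $q$ to $a$ must have length exactly $2$, say $q \to x \to a$ for some intermediate vertex $x$.

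Next, I would append the arc $a \to r$ to obtain a path $q \to x \to a \to r$ from $q$ to $r$. Applying the definition of $\mathcal{F}$ to this path, it must contain $b$, or contain both $c$ and $a$. Since the vertices of the path are $\{q, x, a, r\}$ and $b, c$ are each distinct from $q, a, r$ (by the $6$-cycle), the only way $b$ or $c$ can appear is as $x$. Thus $x \in \{b, c\}$.

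Finally, I would rule out $x = c$: if $x = c$, then $q \to c \to a$ gives the arc $c \to a$, contradicting the standing assumption $c \not\to a$. Hence $x = b$, yielding the desired arc $b \to a$. The mirror statement (if $c \not\to b$, then $a \to b$) follows by interchanging the roles of $(p, a, H_a)$ with $(q, b, H_b)$, applying the analogous path condition on directed paths from $p$ to $r$. I do not foresee any serious obstacle: the argument is a short case analysis, and the only subtlety is choosing the right test-path, namely extending the length-$2$ path from $q$ to $a$ by the arc $a \to r$ so that the defining constraint of $\mathcal{F}$ applies.
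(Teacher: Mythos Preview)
Your proof is correct and follows essentially the same approach as the paper: use strong diameter $2$ to get a short $q$-to-$a$ path, extend by $a \to r$, and invoke the defining constraint on $q$-to-$r$ paths to force the intermediate vertex to be $b$. Your version is in fact slightly more careful than the paper's, since you explicitly rule out the length-$1$ case $q \to a$, which the paper glosses over.
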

\begin{proof} 
By symmetry, we will prove that if $c \not \to a$, then $b \to a$. Suppose $c \not \to a$ and also $b \not \to a$. Since there must be a path of length 2 from $q$ to $a$, there must be a vertex $w \ne b, c$ such that $q \to w \to a$. In which case, there is a path $q \to w \to a \to r$ which goes though neither $b$ nor both $c$ and $a$, thus violating the definition of $\mathcal{F}$.
\end{proof}

\begin{cor}
Either $c \to a$ or $c \to b$.
\end{cor}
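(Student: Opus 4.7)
The plan is to derive this as an immediate consequence of the previous proposition combined with the fact that $D$ is an oriented graph (no bidirected edges). I would argue by contradiction: suppose that neither $c \to a$ nor $c \to b$ is an arc of $D$.

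Applying the previous proposition to the hypothesis $c \not\to a$, I obtain $b \to a$. Applying the same proposition (by symmetry) to the hypothesis $c \not\to b$, I obtain $a \to b$. But then both arcs $a \to b$ and $b \to a$ are present in $D$, which contradicts the assumption that $D$ is an oriented graph (recall that oriented graphs are defined in Section~\ref{intro} to exclude bidirected edges). Hence at least one of $c \to a$ or $c \to b$ must be an arc.

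I do not anticipate any real obstacle here: the corollary is essentially just a reformulation of the previous proposition once the ``oriented'' hypothesis is invoked to rule out the bidirected configuration. The only thing to double-check is that the prior proposition truly asserts both directions (it does, via its ``and likewise'' clause), so that both $a \to b$ and $b \to a$ can be simultaneously forced when $c$ sends arcs to neither $a$ nor $b$.
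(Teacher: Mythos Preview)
Your proposal is correct and is essentially identical to the paper's proof: the paper also assumes $c \not\to a$ and $c \not\to b$, invokes the preceding proposition to force both $a \to b$ and $b \to a$, and concludes with the contradiction that $D$ is oriented.
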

\begin{proof} 
If both $c \not \to a$ and  $c \not \to b$, then by the previous result, we have $a \to b$ and $b \to a$, violating the fact that $D$ is an oriented graph. 
\end{proof}

\begin{prop}
If $D \in \mathcal{F}$, then the induced subgraph on the vertices $H_{ab} \cup \{r\}$ has strong diameter 2.
\end{prop}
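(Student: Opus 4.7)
The plan is to show that for any distinct $x, y \in H_{ab} \cup \{r\}$, there is a directed path of length at most $2$ from $x$ to $y$ whose intermediate vertex lies in $H_{ab} \cup \{r\}$. Since $D$ has strong diameter $2$, such a path (without the restriction to the subgraph) exists in $D$; the entire work is to control the intermediate vertex $w$.

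As a preparatory claim, I would first show that for every $v \in H_{ab}$, the length-at-most-$2$ path from $v$ to $r$ already stays in $H_{ab} \cup \{r\}$: either $v \to r$, or $v \to w \to r$ for some $w$, and the intermediate $w$ cannot be $a$ or $b$ (since $a \to v$ and $b \to v$ by Proposition~\ref{8}, so arcs $v \to a$ or $v \to b$ would violate the oriented property), cannot be $c$ (a direct arc $c \to r$ would violate the defining condition that every $c$-to-$r$ path contain $a$ or $b$), and cannot be $r$ (no loops). For any remaining intermediate $w$, the path $a \to v \to w \to r$ is a simple directed $a$-to-$r$ path through $w$ avoiding $b$, hence $w \in H_{ab}$.

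Next, for distinct $u, v \in H_{ab}$, I would take a length-at-most-$2$ path $u \to w \to v$ in $D$ and argue $w \in H_{ab} \cup \{r\}$. The cases $w = a$ and $w = b$ are ruled out as above, and $w = c$ is impossible because $c \to v$ combined with the preparatory claim yields a $c$-to-$r$ path avoiding both $a$ and $b$, contradicting the definition of $\mathcal{F}$. If $w = r$, then $u \to r \to v$ is already valid. If $w$ is an intermediate in $D \setminus \{a, b, c, r\}$, examine a length-at-most-$2$ path from $w$ to $r$: if $w \to r$, the path $a \to u \to w \to r$ (using $a \to u$ from Proposition~\ref{8}) is a simple $a$-to-$r$ path through $w$ avoiding $b$, placing $w \in H_{ab}$. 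Otherwise $w \to y \to r$, and the subtle step is to rule out $y \in \{a, b, c\}$. The case $y = c$ fails since $c \not\to r$. The cases $y = a$ (i.e., $w \to a$) and $y = b$ require combining the arc $w \to a$ (resp.\ $w \to b$) with a length-at-most-$2$ path from $p$ (resp.\ $q$) to $w$ supplied by strong diameter $2$, and showing each possibility would either place $w \in H_a$ (or $H_b$) or place $w \in H_c$; but $w \in H_a \cup H_b \cup H_c$ combined with the given arc $w \to v$ for $v \in H_{ab}$ constructs a $p$-to-$r$, $q$-to-$r$, or $c$-to-$r$ path that avoids the required vertices from the $\mathcal{F}$-defining conditions, a contradiction. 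Hence $y$ is an intermediate with $y \ne a, b, c, u$ (with $y \ne u$ forced by the oriented property, as $y = u$ would give $w \to u$ alongside $u \to w$), and then $a \to u \to w \to y \to r$ is simple, placing $w \in H_{ab}$. The case $u = r$ with $v \in H_{ab}$ is handled symmetrically by starting from the arc $r \to w$ and extending through $v$'s tail from the preparatory claim.

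The main obstacle is the nested subcase analysis when $w \to y \to r$ with $y \in \{a, b\}$: I must enumerate the possible length-at-most-$2$ $p$-to-$w$ (or $q$-to-$w$) paths and show that each candidate intermediate either violates the oriented property or forces membership of $w$ in $H_a$, $H_b$, or $H_c$, and then translate that membership into a forbidden path through the $\mathcal{F}$ conditions. The recurring technique throughout is to prepend an arc from $a$ or $b$ via Proposition~\ref{8} and append a short tail to $r$ via the preparatory claim, and to use the oriented property to prevent bidirected arcs that would break simplicity of the resulting walk.
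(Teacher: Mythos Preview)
Your overall strategy matches the paper's: take a length-$2$ path $x \to w \to y$ in $D$ with $x,y \in H_{ab}\cup\{r\}$ and show that the intermediate $w$ must already lie in $H_{ab}\cup\{r\}$. The paper's execution is shorter because it leans on the decomposition $V(D)=\{p,q,a,b,c,r\}\cup H_a\cup H_b\cup H_c\cup H_{ab}$: once $w\ne a,b$ is ruled out via the oriented property and Proposition~\ref{8}, the intermediate must be one of $p,q,c$ or lie in $H_a\cup H_b\cup H_c$, and in each case one prepends a path from $p$, $q$, or $c$ to the tail $w\to y\to\cdots\to r$ to obtain a path violating the defining conditions of~$\mathcal{F}$. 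You avoid invoking that decomposition and instead argue constructively; this is more self-contained and, to your credit, makes the ``preparatory claim'' (that the $y$-to-$r$ tail can be taken inside $H_{ab}\cup\{r\}$) explicit where the paper leaves it implicit.

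There is, however, a genuine gap in your final sentence. The case $u=r$ is \emph{not} symmetric to the main case. Your key constructive step is to place $w\in H_{ab}$ via the simple path $a\to u\to w\to y\to r$; when $u=r$ this walk visits $r$ twice and is not a path at all. (Relatedly, the sub-case $w\to r$ is vacuous here, since $r\to w$ together with $w\to r$ would be a bidirected edge.) The conditions defining $\mathcal{F}$ are all about paths \emph{into} $r$, so there is no genuine symmetry between the source $r$ and a source in $H_{ab}$. To repair this you must produce an $a$-to-$r$ or $b$-to-$r$ path through $w$ by other means---for instance, take a length-$\le 2$ path from $a$ (or $b$) to $w$ directly and analyze its intermediate vertex---or, more in the spirit of the paper, argue that $w\in\{p,q\}\cup H_a\cup H_b\cup H_c$ and then prepend a path from $p$, $q$, or $c$ to the tail $w\to v\to\cdots\to r$ furnished by your preparatory claim; that version treats all $x\in H_{ab}\cup\{r\}$ uniformly.
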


\begin{proof} 
Suppose not. Then, since $D \in \mathcal{F}$, $D$ has strong diameter 2. Hence, for some pair of vertices $x,y \in H_{ab} \cup \{r\}$, there is a directed path $x \to v \to y$ for some $v \not\in H_{ab} \cup \{r\}$. Further, by Proposition \ref{8} (or by definition of $\mathcal{F}$ if $y=r$), there is an arc $a \to x$ and $b \to x$, therefore $x \not\to a$ and $x \not\to b$, and so $v \ne a$ nor b. Hence $v \in \{p, q, c\} \cup {H_a} \cup H_b \cup H_c$. If $v = p, q$ or $c$, then there is a path to $r$ without $a$ or $b$. If $v \in H_a$, then there is a path from $p \to w \to \cdots \to v \to y \to r$ where $w \ne c$ that avoids both $a$ and $c$, and similarly if $v \in H_b$, there is a directed path from $q$ to $r$ that avoids both $b$ and $c$. Finally, if $v \in H_c$, then there is a path $c \to\cdots \to v \to y \to\cdots \to r$ that avoids both $a$ and $b$. All of these paths violate the definition of $\mathcal{F}$, a contradiction.
\end{proof}

%




\section{Bounds On $\pi(G)$ For directed graphs with diameter $d$} \label{upper}

As we did with directed graphs of diameter $2$, we provide bounds for the pebbling number of graphs with diameter $d$ in this section. 
Let $f(n,d)$ be the largest $\pi(G)$ for all strongly connected directed graphs $G$ with $n$ vertices and diameter $d$. It is clear that $f(n,d)\geq \pi(G)$ if $G$ has order $n$ vertices and diameter $d$.

\begin{theorem} \label{dboundsharp}
If $D$ is a directed graph of diameter $d$ on $n$ verteices, then \[ \displaystyle f(n,d) > (2^{d-1}-1) \left\lfloor \frac{n-1}{2} \right\rfloor + 2^{{({n-2}) ( {\rm mod~} d})} -1. \] 
\end{theorem}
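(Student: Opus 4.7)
My plan is to prove Theorem~\ref{dboundsharp} by constructing an explicit infinite family of directed graphs $\{D_{n,d}\}$. For each $(n,d)$, the graph $D_{n,d}$ will have exactly $n$ vertices, strong diameter $d$, and admit an unsolvable pebble configuration $C$ of size $(2^{d-1}-1)\lfloor(n-1)/2\rfloor + 2^{(n-2)\bmod d} - 1$. Exhibiting such a family shows $\pi(D_{n,d}) > |C|$, and hence $f(n,d)$ exceeds the stated bound.

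Concretely, I would build $D_{n,d}$ around a root vertex $r$ together with $k := \lfloor(n-1)/2\rfloor$ ``source gadgets'' and a ``residual gadget'' accounting for the remaining vertices. Each source gadget is a small structure whose distinguished ``source'' vertex $a_i$ lies at directed distance $d-1$ from $r$; I would let the gadgets share a common sub-spine of near-root vertices so that each gadget contributes only a minimal number of new vertices to the total. The residual gadget is a directed sub-path of length $m := (n-2)\bmod d$ ending at $r$, whose far endpoint is the residual source. Back-arcs from $r$ to the sources, together with any auxiliary shortcut arcs among vertices close to $r$, are added to make the graph strongly connected with strong diameter exactly $d$.

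The configuration $C$ places $2^{d-1}-1$ pebbles on each source $a_i$ and $2^m - 1$ pebbles on the residual source, giving $|C| = (2^{d-1}-1)k + 2^m - 1$. The remainder of the proof reduces to two verifications: (i) $D_{n,d}$ has strong diameter exactly $d$, which I would check by inspecting every ordered pair of vertices, confirming that directed paths of length at most $d$ exist via the spine and back-arcs; and (ii) $C$ is unsolvable. For (ii) the key quantitative point is that each source alone can push at most $\lfloor (2^{d-1}-1)/2\rfloor = 2^{d-2}-1$ pebbles to its immediate successor, the residual source's $2^m - 1 < 2^m$ pebbles cannot traverse its length-$m$ path to $r$, and, after all $k$ gadgets push simultaneously, the combined contributions along the shared portion of the spine fall strictly short of the threshold required to land a pebble on any in-neighbor of $r$.

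The main obstacle will be reconciling the strong-diameter constraint in (i) with the unsolvability requirement in (ii). The back-arcs and shortcut arcs added to keep the strong diameter at $d$ create potential alternate pebbling routes through which pebbles from distinct gadgets might combine near $r$. I expect the delicate step to be a case analysis on any candidate solving sequence — perhaps organized as a potential-function argument tracking the number of pebbles reachable at each ``distance level'' from $r$ — to verify that no such exotic routing lets the solver aggregate enough pebbles at a single neighbor of $r$. Once this is established, the count of $|C|$ immediately yields the claimed lower bound on $f(n,d)$.
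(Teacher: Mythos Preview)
Your proposal has a fundamental problem that stems from taking the denominator $2$ in $\left\lfloor \frac{n-1}{2} \right\rfloor$ at face value. Compare the claimed lower bound with Theorem~\ref{dboundupper}: for fixed $d\ge 4$ and $n$ large, the coefficient of $n$ in the stated lower bound is $(2^{d-1}-1)/2$, while the upper bound gives coefficient $2^d/d - 1$; the former exceeds the latter already at $d=4$. So the statement as printed cannot hold, and the paper's own proof in fact establishes the bound with $\left\lfloor \frac{n-1}{d} \right\rfloor$ in place of $\left\lfloor \frac{n-1}{2} \right\rfloor$. Once you set $k \approx n/d$, the paper's construction is a layered graph: $d$ layers of roughly $k$ vertices each, joined by directed perfect matchings toward a root $r$, with the last layer a complete digraph and with back-arcs from that layer to all earlier layers (plus $r\to$ layer~1). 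Each source in layer~1 then has its \emph{own} private length-$d$ path to $r$, and placing $2^{d}-1$ pebbles on each source is unsolvable precisely because no two paths meet before layer~$d$.

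Your ``shared sub-spine'' idea is exactly what breaks. With $k=\lfloor (n-1)/2\rfloor$ sources you are forced to let gadgets overlap after at most two private vertices each, and then pebbles from different sources aggregate at the first shared vertex. Concretely, for $d=4$ each of your $k$ sources carries $7$ pebbles, pushes $3$ to its private partner, then $1$ to the shared spine vertex; that vertex now holds $k$ pebbles, and once $k\ge 8$ the root is reachable. Your anticipated ``potential-function argument'' cannot rescue this, because the obstruction is arithmetic, not routing: the target bound simply exceeds $f(n,d)$. The correct construction avoids any sharing by spending $d$ vertices per source, which is why the denominator must be $d$.
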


\begin{proof} 
It suffices to construct a directed graph, $D$, with $n$ vertices and diameter $d$ with $\pi(D) \geq (2^{d-1}-1) \left\lfloor \frac{n-1}{d} \right\rfloor + 2^{{({n-2}) ( {\rm mod~} d})} -1.$

We construct $D$ similar to Figure \ref{mixed2const2}. Let $m = {({n-1}) ( {\rm mod~} d})$. Consider $(d-m)$ copies of the empty graph on $k-1$ vertices $E_{k-1}^1, \ldots, E_{k-1}^{d-m-1}$ and $m$ copies of the empty graph on $k$ vertices $E^{d-m}_{k} \ldots E^{d-1}_{k}$.
For simplicity going forward, we omit the subscript and call the set of vertices in $E^\ell$ as ``layer $\ell$.'' Add a directed matching between each pair of successive layers: $E^i$ and $E^{i+1}$ for $i=1, \ldots, d-2$. Add a copy of the complete directed graph on $k$ vertices $K_k$ and add a directed matching from $E^{d-1}$ to $K_k$, and for each $i$, add all remaining arcs from $K_k$ to $E^i$ where there is not already a reverse arc to $K_k$. Finally, add a root $r$ where each vertex in the $K_k$ has a directed edge to $r$ and $r$ has a directed edge to all vertices in $E^1$.  We refer to the vertices in $K_k$ as layer $d$ and the root $r$ as layer $d+1$. We now show that $D$ has diameter $d$. Choose any two vertices $x$ and $y$. If $x$ or $y$ is $r$, then there is path of length at most $d$, by either following the matching edges from $x$ to $r$, or by following the matching edges backwards from $r$ to $y$. Hence, we can assume that $x$ nor $y$ is $r$. Suppose $x$ is in layer $i$ and $y$ is in layer $j$. Then we have a path $x \to v_{i+1} \to \ldots \to v_{d-1} \to v_{d} \to y$ where $v_\ell$ is in layer $\ell$, unless $v_{d-1} = y$, in which case, we have a shorter path $x \to v_{i+1} \to \ldots \to v_{d-1} = y$. In both cases each path is length at most $d$. 

We claim that $\pi(D) \geq (2^{d-1}-1) (k-1) + 2^{{({n-2}) ( {\rm mod~} d})} -1 $ . Consider a configuration of  $(2^{d}-1) k$ pebbles by placing $2^{d}-1$ pebbles on each vertex in layer 1; and an additional $2^{{({n-2}) ( {\rm mod~} d})} -1$ pebbles on the vertex $v'$ in layer $d-m+1$ with no matching arc from layer $d-m$. Observe that the only out-arcs from each vertex in layer 1 is a matching into layer 2. Likewise, the only out-arcs from each vertex in layer $i$ is a matching into layer $i+1$ for $i = 1, \ldots, d-1$. Therefore, the only way to get a pebble onto a vertex in layer $d$ is use only successive pebbling moves along a single path $v_1 \to \ldots \to v_d$ or $v' \to \ldots \to v_d$ where $v_\ell$ is some vertex in layer $\ell$. However, since $v_1$ only starts with $2^d-1$ pebbles and $v'$ starts with $2^{{({n-2}) ( {\rm mod~} d})} -1$ pebbles, at most one pebble can ever reach any vertex in layer $d$, and hence, there is no way to pebble $r$.
\end{proof}

\begin{figure}[!h]
\begin{center}
\includegraphics[width=0.3\textwidth]{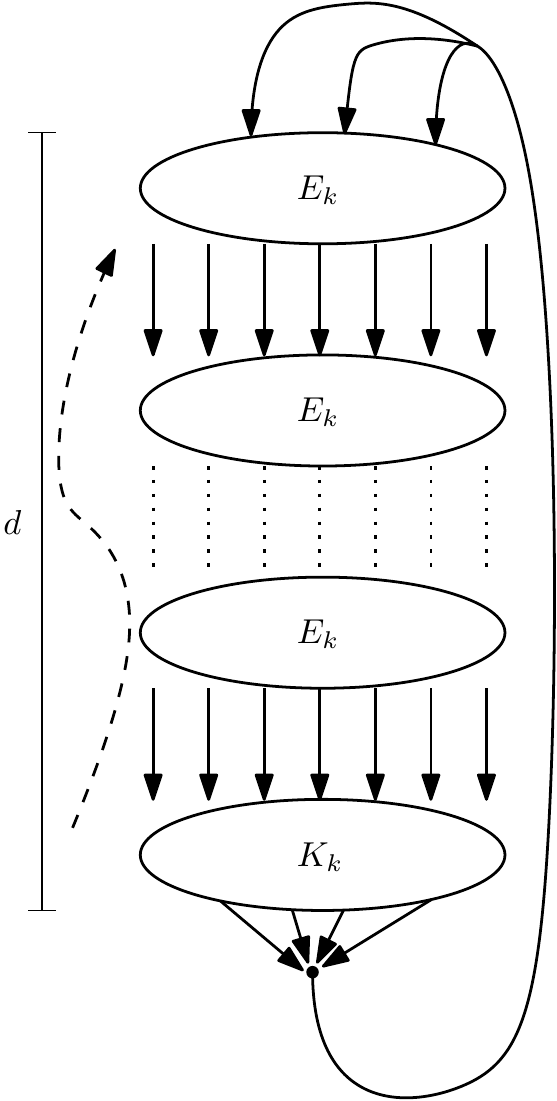}
\end{center}
\caption{A schematic for the construction in Theorem \ref{dboundsharp}. Here, there are $d$ levels each containing $k$ vertices plus a root and each level is joined by a directed matching toward the root. All but the last level are empty graphs and the last level is a complete graph. Additionally, the upwards curve denotes that each vertex of the last level has an arc to every vertex above it, except for the corresponding single vertex in the level immediately preceding it.}
\label{mixed2const2}
\end{figure}


The next theorem stands in direct contrast to Theorem 7 in \citep{PSY}. 

\begin{theorem} \label{dboundupper}
Let $G$ be a strongly connected directed graph of order $n$ with diameter $d$. For some fixed positive integer $d$, 
\[
f(n,d)\leq n\left(\frac{2^d}{d}-1\right)+2^{4d+1}\left(1-\frac{1}{d}\right).
\]
\end{theorem}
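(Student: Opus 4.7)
My plan is to adapt the strategy of Postle--Streib--Yerger (PSY) to the directed setting. The bound reflects a key structural difference: in directed graphs, pebbles cannot be gathered by traversing both ends of a path toward a meeting point in the middle, so the exponent must be $2^d$ rather than $2^{\lceil d/2 \rceil}$. The gain by a factor of $d$ in the main term comes from a decomposition argument: pebbles can be pooled along any in-path of length $d$ ending at $r$, and such a path on $d$ vertices can carry up to roughly $2^d - 1$ pebbles without being solvable, giving an average of $(2^d - 1)/d$ pebbles per vertex.

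To set things up, I would fix a putative unsolvable configuration $C$ with root $r$ and build a BFS in-arborescence $T$ rooted at $r$ in the reverse of $G$. The diameter hypothesis ensures every vertex has depth at most $d$ in $T$, and each $T$-path is valid for pebbling. I would then decompose $V(G)\setminus\{r\}$ greedily: at each stage pick a deepest unassigned leaf $v$ of $T$, extract the entire $T$-path from $v$ toward $r$ until it meets either $r$ or an already-extracted vertex, and call the extracted in-path a \emph{block}. Each block has at most $d$ vertices.

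Next I would establish a path-pebbling lemma: for an in-path $v_1 \to v_2 \to \cdots \to v_\ell$ feeding into a vertex $w$ (either $r$ or an already-processed block endpoint), a configuration placing $c_i$ pebbles on $v_i$ delivers at least $\lfloor \sum_i c_i / 2^{\ell - i + 1} \rfloor$ pebbles to $w$, modulo a bounded rounding loss per step. In particular, any block of length $d$ whose tail feeds $r$ can carry at most $2^d - 1$ pebbles in an unsolvable configuration. Summing over blocks of length $d$ gives a main term of at most $(2^d - 1)\lfloor (n-1)/d \rfloor \le n(2^d/d - 1) + O(2^d)$; short blocks, pebbles near $r$, and rounding losses at merge points contribute to the additive error.

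The main obstacle will be controlling the interaction between blocks at near-root vertices. Pebbles transferred from the tail of one block may combine nontrivially with those of another block at a shared endpoint, and aggregating these contributions while tracking rounding losses requires careful combinatorial bookkeeping --- this is precisely where the $2^{4d+1}(1-1/d)$ additive term enters. To bound this term, I would use that only $O(2^d)$ distinct ``near-root'' merge vertices can arise, since the BFS in-arborescence has at most $2^d$ vertices within the first $d$ levels beyond the root, and each such merge or short block can siphon at most $O(2^d)$ excess pebbles before triggering a solution, yielding the $O(2^{4d})$ correction. A secondary subtlety is whether the crude deepest-first greedy decomposition actually achieves the stated $(2^d-1)/d$ average per vertex; if it does not, a heavy-path-style refinement of the decomposition may be required to avoid stranding pebbles in suboptimally short blocks, which would only affect the form of the additive constant and not the main term.
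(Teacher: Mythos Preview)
Your approach is genuinely different from the paper's, and the main-term heuristic is sound: decomposing a BFS in-arborescence into in-paths of length at most $d$ and using that a path on $\ell$ vertices absorbs at most $2^\ell-1$ pebbles does give an average of at most $(2^d-1)/d$ pebbles per vertex, since $(2^\ell-1)/\ell$ is increasing in $\ell$. The paper, by contrast, never decomposes into paths at all. It runs a discharging argument: it sets the excess $X(v)=P(v)-\lfloor 2^d/d\rfloor+1$, calls vertices with $P(v)\ge 2^d/d$ \emph{heavy}, and splits on whether there are more than $2^{3d}+2^{2d}$ heavy vertices. If not, the additive term is immediate. If so, each heavy vertex discharges its excess uniformly over the non-tight vertices within distance $\lceil d-\log_2 d\rceil$; the key combinatorial facts are that any vertex lies within that distance of at most $2^d$ heavy vertices (else those heavy vertices each push a pebble there and one solves), and dually each heavy vertex sees at least $2^{2d}$ non-tight recipients at that radius, so every vertex ends with nonpositive charge.

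Your plan, however, has a concrete gap in the additive-term analysis. You write that ``only $O(2^d)$ distinct `near-root' merge vertices can arise, since the BFS in-arborescence has at most $2^d$ vertices within the first $d$ levels beyond the root.'' This is false: the in-arborescence has depth at most $d$ by the diameter hypothesis, so \emph{all} $n$ vertices lie within the first $d$ levels, and the in-degree at any level is unbounded. There is no a priori bound of the form $O(2^d)$ on the number of merge points, and hence your product ``$O(2^d)$ merge points $\times$ $O(2^d)$ excess each'' does not yield the claimed $O(2^{4d})$ correction. Without a replacement for this step, the block-decomposition strategy does not control the interaction between blocks, and the additive term is unbounded as stated. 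If you want to rescue the approach, you would need an argument limiting how many blocks can simultaneously contribute a pebble to a common ancestor without solving --- which is essentially the content of the paper's Claim~1, and suggests that some form of the discharging idea may be unavoidable.
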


\begin{proof}
Let $(G,r)$ be a rooted graph with configuration $P$ and suppose that there is no sequence of pebbling moves that moves a pebble to a root $r$. 
For each vertex $v$, let $X(v)=P(v)-\left\lfloor\frac{2^d}{d}\right\rfloor+1$.
The following proof is broken into two parts, depending on the size of $H$ where $v\in H$ if $P(v)\geq \frac{2^d}{d}$. We call the set of vertices in $H$ \textit{heavy vertices}. We call the set of vertices with $X(v)=0$ \textit{tight vertices}.

\noindent \textbf{Case 1:} Suppose that $|H|\leq 2^{3d}+2^{2d}$.

Then we can bound the total number of pebbles beyond $\frac{2^d}{d}$ by $2^{4d+1}\left(1-\frac{1}{d}\right)$. Since $P$ is not solvable and the number of pebbles at any heavy vertex is at most $2^d$ (otherwise we could pebble $r$), $X(u)<2^d-\frac{2^d}{d}=2^d\left(1-\frac{1}{d}\right)$. It follows that
\[
\sum_{v\in V(G),X(v)>0} X(v) \leq |H| \left(2^d-\frac{2^d}{d}\right) \leq (2^{3d}+2^{2d})(2^d)\left(1-\frac{1}{d}\right) \leq 2^{4d+1}\left(1-\frac{1}{d}\right).
\]
Since there are at most $\frac{2^d}{d}-1$ pebbles on each of the non-heavy vertices, the number of pebbles in $G$ is given by
\begin{align*}
\left(\frac{2^d}{d}-1\right)|V(G)\setminus V(H)| + 2^{4d+1}\left(1-\frac{1}{d}\right) &\leq n \left(\frac{2^d}{d}-1\right) + 2^{4d+1}\left(1-\frac{1}{d}\right)< f(n,d).
\end{align*}
Thus, the theorem holds.

\noindent \textbf{Case 2:} Suppose that $|H|> 2^{3d}+2^{2d}$.

We will apply a discharging argument on the vertices of $G$. The initial charge on each vertex $v\in V(G)$ is $X(v)$. Note that we will only apply the following discharge rule a single time over all vertices of $G$. 
For each heavy vertex $v\in H$ (that is $X(v)>0$), remove charge $X(v)+1$ and distribute $X(v)+1$ uniformly over the vertices in $C_v=\{u\in V(G): u \in N^{\lceil d-\log_2(d)\rceil}(v), X(u)\neq 0\}$, 
that is, the set of vertices in $G$ that are distance at most $\lceil d-\log_2(d)\rceil$ away from $v$ with non-zero charge.
Note that $C_v$ may include heavy vertices. 
The two following claims will show that each non-tight vertex receives at most $2^d\cdot \frac{1}{2^d}=1$ unit of total charge. Before discharging, the sum of the excess over all vertices of $G$ was positive. By proving these two claims, we will show that the sum of the charge on each vertex after discharging is non-positive. But since the total charge in the graph does not change and the sum of the charge on each vertex is equal to the sum of the excess over all vertices of $G$, this is a contradiction, and thus our result will hold. This will show that the number of pebbles initially on $V(G)$ is bounded above by $n\left(\frac{2^d}{d}-1\right)$.

\noindent \textbf{Claim 1:} Each vertex $v\in V(G)$ receives charge from at most $2^d$ heavy vertices.

\begin{claimproof}
Define $H_v = \{u\in V(H) \,:\, u\in N^{\lceil d-\log_2(d)\rceil}(v), X(u)\geq 1\}$. Observe that each vertex in $H_v$ can send a pebble to $v$. If $|H_v|\geq 2^d$ then $v$ would have $2^d$ pebbles and thus could pebble $r$. 
\end{claimproof}

\noindent\textbf{Claim 2:} For any vertex $v$ with $X(v)\neq 0$, the charge received from any heavy vertex $u$ in $N^{\lceil d-\log_2(d)\rceil}(v)$ is at most $\frac{1}{2^d}$. 

\begin{claimproof}
Another way to say this claim would be: for any vertex $v$ with $X(v)\neq 0$, for each heavy vertex $u$ that is distance at most $\lceil d-\log_2(d)\rceil$ from $v$, $|C_u|\geq 2^d$. In which case, $u$ will discharge $1/2^d$ to all vertices in $C_u$. 

For the remainder of this claim, let $v\in V(G)$ be some arbitrary vertex in $V(G)$. Let $\tau$ be a spanning directed BFS tree rooted at $v$ so that there is a directed path in $\tau$ to $v$ from all vertices in $V(G)$. 
Note that the largest distance between any vertex in $\tau$ and $v$ is $d$ since we built a BFS tree.
We define the ancestor of a vertex $u$ in $\tau$ as any vertex $x$ for which there is a directed path from $u$ to $x$ and a descendant of a vertex $u$ in $\tau$ as any vertex $x$ for which there is a directed path from $x$ to $u$.
Define $A_v=\{u\in V(G)\,:\, u\in \partial( N^{\lceil d-\log_2(d)\rceil}(v),X(u)\neq 0, \text{ and $u$ is an ancestor of some $w\in H$ in $\tau$}\}$, that is, $A_v$ is the set of vertices $u$ in $V(G)$ distance exactly $\lceil d-\log_2(d)\rceil$ away from $v$ with non-zero charge such that there is a directed path in $\tau$ from some heavy vertex to $u$. 
Notice that $A_v\subseteq C_v$ by definition of $A_v$ and $C_v$. Therefore, we need only show that $|A_v|\geq 2^d$ to show that $|C_v| \geq 2^d$. 

By Claim 1, there are at most $2^d$ heavy vertices in $N^{\lceil d-\log_2(d)\rceil }(v)$. The number of tight vertices in $\partial( N^{\lceil d-\log_2(d)\rceil}(v))$ that have a heavy vertex descendant in $\tau$ is at most $2^d-1$ since these tight vertices can be made heavy vertices by pebbling the tight vertices via the directed path from a heavy vertex to a tight vertex. 
Each tight vertex in $\partial(N^{\lceil d-\log_2(d)\rceil}(v))$ can have at most $2d^2$ heavy descendants in $\tau$. 
Thus there are at most $2d^2(2^d-1)$ heavy vertices that have tight vertex ancestors in $\partial( N^{\lceil d-\log_2(d)\rceil}(v))$. 

So far, we have determined the position of at most $2^d+2d^2(2^d-1)=2^d(2d^2+1)+1$ vertices in $H$. Since $|H|> 2^{3d}+2^{2d}$, there are at least $2^{3d}$ heavy vertices left to identify. Since there are no cycles in the BFS tree, all of the unidentified vertices in $H$ must have a unique ancestor in $\tau$ from $A_v$, thus the number of unidentified vertices in $H$ will help us find the size of $A_v$. Again by Claim 1, each vertex in $A_v$ is the ancestor in $\tau$ of at most $2^d$ vertices in $H$. Therefore, by our assumption on the size of $H$, 
\[
|A_v|\geq \frac{|H|-2^d(2d^2+1)-1}{2^d} \geq 2^{2d}+2^d-(2d^2+1)-\frac{1}{2^d}\geq 2^{2d}.
\]
Since $v$ was arbitrarily chosen, this claim holds for all $v\in V(G)$, which suffices to prove the claim. 
\end{claimproof}

Thus $f(n,d)\leq n\left(\frac{2^d}{d}-1\right)+2^{4d+1}\left(1-\frac{1}{d}\right)$, as expected.
\end{proof}

\bibliographystyle{amsplain}

\end{document}